\newtheorem{theorem}{Theorem}
\newtheorem{prop}[theorem]{Proposition}
\theoremstyle{definition}
\theoremstyle{remark}
\newtheorem{rem}{Remark}
\title{An unbiased Monte Carlo estimator for derivatives. Application to CIR.}
\author[1]{Victor Reutenauer}
\affil[1]{Fotonower, France.}
\author[2]{Etienne Tanr\'e}
\affil[2]{Universit\'e C\^ote d'Azur, Inria, France.}
\begin{document}
	
\maketitle

\begin{abstract}
	In this paper, we present extensions of the exact simulation algorithm introduced by Beskos et al.~\cite{BPR06}. First, a  modification in the order 
	in which the simulation is done accelerates the algorithm. 
	 In addition, we propose a truncated version of the modified algorithm.  
	 We obtain a control of the bias of this last version, exponentially small in function of the truncation parameter.
	Then, we extend it to more general drift functions.
	Our main result is an unbiased algorithm to approximate 
	the two first derivatives with respect to the initial condition \(x\) of quantities with the form \(\mathbb{E}\Psi(X_T^x)\). 
	We describe it in details in dimension 1 and also discuss its multi-dimensional extensions for the evaluation of \(\mathbb{E}\Psi(X_T^x)\).
	Finally, we apply the algorithm to the CIR process and perform numerical tests to compare it with classical approximation procedures.
\end{abstract}

\noindent\textbf{Keywords:} Unbiased Monte Carlo methods; Monte Carlo Approximation of Derivatives; Exact Simulation of SDE.

\noindent\textbf{AMS2010 class:} 65C05, 60J60.

\section{Introduction \label{sec:intro}}

In this paper, we are interested in the approximation of the law of 
a one dimensional stochastic process \((X^x_t, t\geq 0)\), defined as the unique solution of a 
Stochastic Differential Equation (SDE)
\begin{equation}\label{eq:edsgene}
X_T^x = x + \int_0^T\alpha(X_t^x)dt + \int_0^T\sigma (X_t^x)dW_t,
\end{equation}
with smooth coefficients \(\alpha\) and \(\sigma\). 
Let \(\Psi\) be a measurable function. The quantities we aim to evaluate
take form
\begin{equation}
\label{eq:prix}
P_\Psi(x):=\mathbb{E}\Psi(X_T^x).
\end{equation}
We also evaluate their sensitivities to the parameters of the model.
We are especially interested in the dependance on the initial condition \(x\), 
\begin{align}
\label{eq:delta}
\Delta_\Psi(x) &:= \frac{d}{dx} \mathbb{E}\Psi(X_T^x)\\
\label{eq:gamma}
\Gamma_\Psi(x) &:= \frac{d^2}{dx^2} \mathbb{E}\Psi(X_T^x).
\end{align}
These two derivatives are 
known as Delta and Gamma in the context of financial mathematics.

The most simple method to approximate \eqref{eq:prix} consists in
a time discretisation (say with step \(\delta\)) of \eqref{eq:edsgene} 
with an Euler scheme.
For an approximation of \eqref{eq:delta} or \eqref{eq:gamma}, we should 
evaluate \eqref{eq:prix} with two or three values, say for 
\(x-dx\), \(x\) and \(x+dx\).
Then, we use a finite difference approximation of the derivatives. This 
method is very simple to implement, but we have three sources of error:
\begin{enumerate}[1)]
	\item two biases due to 
	\begin{enumerate}[a -]
		\item the time discretisation \(\delta\);
		\item the finite difference approximation parameter \(dx\);
	\end{enumerate}
	\item the statistical error.
\end{enumerate}
In \cite{BR05} and \cite{BPR06}, the authors 
 proposed an exact simulation algorithm for one dimensional SDE
 with constant diffusion coefficient \(\sigma(x)\equiv 1\) (see Section~\ref{subsec:21}).
 This method removes the bias of type \textit{a} in the approximation
 of \(P_\Psi(x)\). More recently, several authors have worked on
 algorithms withous bias of type \textit{a}. For instance, in Bally and 
 Kohatsu-Higa~\cite{BKH15}, a theoretical stochastic representation of the 
 parametrix  method is developped and used successfully to reach this goal.
 Similar ideas are developped in \cite{HLTT16} to evaluate \(P_\Psi\) 
 for smooth functions \(\Psi\) and diffusion process in \(\mathbb{R}^d\).
 Gobet and M'rad~\cite{gobetmrad16} have proposed a multilevel Monte Carlo 
 method with random number of levels. They succeed to avoid bias for 
 Lipschitz continuous function \(\Psi\).

Otherwise, in \cite{fournier_etal_1999} the Malliavin calculus theory 
is developed to obtain expressions of the derivatives \(\Delta_\Psi(x)\) 
and \(\Gamma_\Psi(x)\) without bias of type b-. The authors write
\[
\dfrac{d}{dx}\mathbb{E}[\Psi(X_T^x)] = \mathbb{E}[\Psi(X_T^x)H_T],
\]
where \(H_T\) is an explicit random weight. 

In this paper, we 
extend 
Beskos et al. method of simulation:  
we simulate the Poisson process by ordering 
the points in increasing ordinate (see Sec.~\ref{subsec:pp_theo_simulation}).
With this modification, the rejection of Brownian bridge trajectories are 
decided faster and the efficiency of the algorithm is higher. Moreover, one 
should relax a little bit the assumption on the drift coefficient \(\alpha\). 
Our algorithm is efficient to compute \(P_\Psi\) even for full path dependent function \(\Psi\). Thereby, even if it applies essentially for one dimensional diffusion processes, our work, in this setting, is more general than \cite{HLTT16,gobetmrad16}.

Furthermore, we propose an unbiased algorithm to compute the derivatives 
\eqref{eq:delta} and \eqref{eq:gamma}.
The idea combines Fourni\'e et al.~\cite{fournier_etal_1999} formula and some 
generalisation of Beskos et al.~\cite{BPR06} rejection procedure.

The paper is organised as follows. 
We describe the algorithms in a general context
in Section~\ref{sec:section2}. Section~\ref{sec:cirmodel} is
devoted to a detailed presentation for the CIR model.
We compare the efficiency of our algorithm 
with classical estimators in Section~\ref{sec:numerical}.

	\thanks{\textbf{Acknowledgment: }The authors would like to thank gratefully CA-CIB and Inria. This work started during an official collaboration
		between their teams. They are grateful to Inria Sophia Antipolis - Méditerranée
		``Nef'' computation cluster for providing resources and support. They want also to thank Pierre \'Etor\'e for pointing out a mistake in a preliminary version of this work.}

\section{Unbiased Estimators}\label{sec:section2}
\subsection{Beskos, Papaspiliopoulos and Roberts
	unbiased estimator}\label{subsec:21}
Here, we recall the main ideas developped in \cite{BR05,BPR06}
to exactly simulate  the solution of
one dimensional stochastic differential equations.
Assume that the process \(X^x\) solves the equation
\begin{equation}\label{eq:edssigma1}
X_T^x = x+\int_0^T\alpha(X_t^x)dt + W_T
\end{equation}
(i.e. \(\sigma \equiv 1\) in \eqref{eq:edsgene}). 
The main idea 
is a smart use of
Girsanov Theorem:
\begin{equation*}
\mathbb{E}\left[\Psi(X_T^x) 
\vphantom{\exp\left(\int_0^T\alpha(B_s) dB_s - \int_0^T\dfrac{\alpha^2(B_s)}{2}ds\right)}
\right] = \mathbb{E}\left[\Psi(B_T^x)
\exp\left(\int_0^T\alpha(B_t^x) dB_t^x - \int_0^T\dfrac{\alpha^2(B_t^x)}{2}dt\right)\right]
\end{equation*}
where \((B_t^x)_{t\geq 0}\) is a one dimensional Brownian motion with 
\(B_0^x = x\). 
The dimension allows one to transform the stochastic integral:
\begin{equation}\label{eq:removstochint}
\int_0^T\alpha(B_t^x) dB_t^x = A(B_T^x) - A(B_0^x) - \int_0^T\dfrac{\alpha^\prime(B_t^x)}{2}dt,
\end{equation}
where \(A(x) = \int_0^x \alpha(y) dy\). Then, one obtains
\begin{equation}\label{eq:calculbpr}
\mathbb{E}\left[\vphantom{\exp\left(-\int_0^T\varphi(B_t^x)dt\right)}\Psi(X_T^x) \right] = 
\mathbb{E}\left[\Psi(B_T^x)\exp\bigg(\!A(B_T^x) - A(B_0^x)-\!\!\int_0^T\!\!\frac{(\alpha^2+\alpha^\prime)}{2}(B_t^x)dt\bigg)\right].
\end{equation}
Next, we replace in \eqref{eq:calculbpr} the Brownian motion \((B^x_t,0\leq t\leq T)\) by a Brownian bridge \((\tilde{B}^x_t,0\leq t\leq T)\), where the final value \(\tilde{B}^x_T\) has the distribution
\begin{equation}\label{eq:distribfinBBrid}
\mathbb{P}(\tilde{B}^x_T\in d\theta)  = C\exp\left(- \dfrac{(\theta-x)^2}{2T}+
A(\theta) \right)d\theta,
\end{equation}
where \(C\) is a normalisation.
Then, denote 
\begin{equation}\label{eq:defdephi}
\varphi(y) = \frac{\alpha^2(y) + \alpha^\prime(y)}{2},
\end{equation} 
there is a constant \(\tilde{C}\), depending on \(C\) and \(\alpha\) (but not on \(\Psi\)), such that
\begin{equation}\label{eq:girsanovdansbpr}
\mathbb{E}\left[\Psi(X_T^x) \right] = \tilde{C}\mathbb{E}\left[\Psi(\tilde{B}^x_T)
\exp\left(-\int_0^T\varphi(\tilde{B}^x_t)dt\right)\right].
\end{equation}
If we moreover assume that \(\varphi\) takes value in a compact set, say \(0 \leq \varphi(y) \leq K\), one can exactly simulate the diffusion \(X^x\) with a 
rejection procedure. Namely, one simulates a path of the Brownian bridge \(\tilde{B}^x\) 
and accept it with probability \(\exp\left(-\int_0^T\varphi(\tilde{B}^x_t)dt\right)\). 
To do it, one simulates a Poisson process (independent of \(\tilde{B}^x\)) of unit intensity on \([0,T]\times[0,K]\) and accepts the Brownian bridge path if and only if there is no point of the Poisson process
in the hypograph \(D(\omega)\) of \(\varphi(\tilde{B}^x_t)\)
\begin{equation}\label{eq:defhypograph}
D(\omega) = \left\{(t,y)\in[0,T]\times[0,K], y\leq\varphi(\tilde{B}^x_t)\right\}.
\end{equation}
It is easy to verify that the probability to accept the path is 
\(\exp\left(-\int_0^T\varphi(\tilde{B}^x_t)dt\right)\). Furthermore, 
we only need to know the value of the Brownian bridge at a finite number
of times \(0<t_1<\cdots<t_n \leq T\), the abscissas of the points of the Poisson process.
So, we have
\begin{equation}\label{eq:egalloiavecpoissonhypo}
	\mathbb{E}\left[\Psi(X_T^x) \right] = \mathbb{E}\left[\Psi(\tilde{B}^x_T)\middle| N\cap D(\omega) = \emptyset\right],
	\end{equation}
	where \(N\) is a Poisson process with unit intensity on \([0,T]\times[0,K]\), independent of \((\tilde{B}^x_t,0\leq t\leq T)\).
\begin{rem}\label{rem:relachehypsurphi}
	We have written this short presentation under the assumption \(0\leq \varphi \leq K\). It should be easily generalised to
	the cases where:
	\begin{enumerate}
		\item \(\varphi\) is bounded, but not necessary nonnegative. 
		In this case, we only have to replace in \eqref{eq:girsanovdansbpr} the function \(\varphi\) by \(\varphi - \inf_\mathbb{R}\varphi\) and the constant \(\tilde{C}\) by \(\tilde{C}\exp(-T\inf_{\mathbb{R}}\varphi)\).
		\item \label{extensionavecmin}\(\varphi\) has no finite global upper bound, but has an upper bound in \(+\infty\) or \(-\infty\). 
		For instance, \(\limsup_{y\rightarrow  - \infty}\varphi(y) = +\infty\) 
		and \(\limsup_{y\rightarrow  + \infty}\varphi(y) < \infty\). Here, we only have to first simulate the infimum \(m(\omega)\) of \(\tilde{B}^x\) on \([0,T]\) and the time 
		\(t_m(\omega)\)
		at which it is reached. Then, we simulate a Poisson process on \([0,T]\times [0,\tilde{K}(\omega)]\) with 
		\(\tilde{K}(\omega) = \sup_{y\geq m(\omega)}\varphi(y)\) (see \cite{BPR06}).
	\end{enumerate}
\end{rem}
Williams decomposition of Brownian paths \cite{williams} 
gives the conditional law \((\tilde{B}^x_t,0\leq t \leq T | m(\omega), t_m(\omega))\): conditionally to \(m\) and \(t_m\), 
the processes \((\tilde{B}^x_{t_m + t} - m,0\leq t \leq T - t_m)\) and 
\((\tilde{B}^x_{t_m - t} - m ,0\leq t \leq t_m)\) are two independent 
Bessel bridges processes of dimension 3. Such a process is simple to exactly 
simulate at a finite number of times.

\subsection{Unbiased estimator of the first derivative (Delta)}\label{subsec:22}
In this section, we present our main results. We generalise the unbiased algorithm introduced by Beskos et al. 
\cite{BPR06} to approximate the sensitivities \(\frac{d}{dx}\mathbb{E}[\Psi(X_T^x)]
\)
with an unbiased estimator.
\begin{prop}\label{prop:estimateursansbiaisdelta}
	Let \((X^x_t,0\leq t\leq T)\) be the solution of \eqref{eq:edssigma1}, starting from \(x\), and \(\Psi\) a measurable function. 
	Assume that \(\forall y\in \mathbb{R}\), we have \(-\hat{K}\leq \alpha^\prime(y)\leq 0\) and 
	\(0\leq \alpha^2(y)+\alpha^\prime(y)\leq 2K\).
	Then, an unbiased Monte Carlo procedure to evaluate \(\frac{d}{dx} \mathbb{E}\Psi(X^x_T)\) is available 
		\begin{multline*}
		\frac{d}{dx} \mathbb{E}\Psi(X^x_T) = 
		- \mathbb{E}\left[\dfrac{x\Psi(\tilde{B}_T^x)}{T} \middle| N\cap D = \emptyset\right]\\
		+\mathbb{E}\left[\dfrac{\Psi(\tilde{B}_T^x)}{T}\left(\tilde{B}_T^x-T \alpha(\tilde{B}_{U_2T}^x)\right)
		\mathbbm{1}_{\hat{N}\cap\hat{D}=\emptyset}
		\middle| N\cap D = \emptyset\right]\\
		-\mathbb{E}\left[
		\Psi(\tilde{B}_T^x)
		\left(\tilde{B}_{U_1T}^x - U_1T\alpha(\tilde{B}_{U_1U_2T}^x)\right)
		\alpha^\prime(\tilde{B}_{U_1T}^x)
		\mathbbm{1}_{\hat{N}\cap\hat{D}^1=\emptyset}
		\middle| N\cap D = \emptyset\right],
		\end{multline*}	
	where:
	\begin{itemize}
		\item \((\tilde{B}_t^x, 0\leq t\leq T)\) is a Brownian bridge with \(\tilde{B}^x_T\) given by \eqref{eq:distribfinBBrid};
		\item \(D\) is the hypograph  of \(\varphi(\tilde{B}_t^x)\) (see \eqref{eq:defdephi} and \eqref{eq:defhypograph});
		\item \(\hat{D}\) is the hypograph of \(-\alpha^\prime(\tilde{B}_t^x)\) and \(\hat{D}_1 = \hat{D}\cap ([0,U_1T]\times \mathbb{R}_+)\);
		\item \(N\) and \(\hat{N}\) are two independent Poisson processes with
		unit intensity on \([0,T]\times[0,K]\) and \([0,T]\times[0,\hat{K}]\), (independent of \(\tilde{B}^x\));
		\item \(U_1\) and \(U_2\) are two independent random variables with uniform distribution on \([0,1]\) (independent of \(\tilde{B}^x\), \(N\) and \(\hat{N}\)).
	\end{itemize}
	
\end{prop}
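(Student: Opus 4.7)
The plan is to combine the Bismut--Fourni\'e Malliavin integration-by-parts formula for Delta with the path-level extension of the rejection identity~\eqref{eq:egalloiavecpoissonhypo}. Under the hypothesis $-\hat{K}\le\alpha'\le 0$, the first variation process $Y_t:=\partial_x X_t^x$ solves $dY_t=\alpha'(X_t^x)Y_t\,dt$ with $Y_0=1$, so $Y_t=\exp\bigl(\int_0^t\alpha'(X_s^x)\,ds\bigr)\in[e^{-\hat{K}T},1]$. The Bismut formula of~\cite{fournier_etal_1999}, specialized to $\sigma\equiv 1$, yields the starting point
\[
\frac{d}{dx}\mathbb{E}[\Psi(X_T^x)] = \mathbb{E}\bigl[\Psi(X_T^x)\,H_T\bigr],\qquad H_T:=\frac{1}{T}\int_0^T Y_t\,dW_t,
\]
where $W$ is the Brownian motion driving~\eqref{eq:edssigma1}.

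Next I rewrite $H_T$ as a functional of the path of $X^x$. Because $Y$ has bounded variation in $t$, Stieltjes integration by parts applied to $(x+W_t)Y_t$, together with $dY_t=\alpha'(X_t^x)Y_t\,dt$, gives
\[
\int_0^T Y_t\,dW_t = (x+W_T)Y_T - x - \int_0^T (x+W_t)\,\alpha'(X_t^x)\,Y_t\,dt;
\]
substituting $x+W_t=X_t^x-\int_0^t\alpha(X_s^x)\,ds$ (the martingale part of $X^x$) recasts the weight as
\[
H_T = -\frac{x}{T} + \frac{Y_T}{T}\Bigl(X_T^x-\int_0^T\alpha(X_s^x)\,ds\Bigr) - \frac{1}{T}\int_0^T\Bigl(X_t^x-\int_0^t\alpha(X_s^x)\,ds\Bigr)\alpha'(X_t^x)\,Y_t\,dt.
\]

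I then replace each deterministic integral by an unbiased randomization. Introduce, mutually independent and independent of everything else, two uniforms $U_1,U_2\sim\mathcal{U}[0,1]$ and a unit-intensity Poisson process $\hat{N}$ on $[0,T]\times[0,\hat{K}]$. The elementary identities $\tfrac{1}{T}\int_0^T G(t)\,dt=\mathbb{E}_{U_1}[G(U_1T)]$, $\tfrac{1}{T}\int_0^T\alpha(X_s^x)\,ds=\mathbb{E}_{U_2}[\alpha(X_{U_2T}^x)]$, $\int_0^{U_1T}\alpha(X_s^x)\,ds=U_1T\,\mathbb{E}_{U_2}[\alpha(X_{U_1U_2T}^x)]$, together with $Y_t=\mathbb{P}(\hat{N}\cap\hat{D}^t=\emptyset\mid X^x)$ (valid because $-\alpha'\in[0,\hat{K}]$), where $\hat{D}^t$ denotes the hypograph of $-\alpha'(X^x)$ restricted to $[0,t]$, reshape the three pieces of $H_T$ into exactly the three summands of the statement. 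It then suffices to apply the path-level BPR identity $\mathbb{E}[F(X^x_\cdot)]=\mathbb{E}[F(\tilde{B}^x_\cdot)\mid N\cap D=\emptyset]$ with $F=\Psi(X_T^x)\,H_T$; since $(U_1,U_2,\hat{N})$ are independent of $(\tilde{B}^x,N)$ they commute with the conditioning, producing the announced formula. The main obstacle lies in the Stieltjes integration by parts that eliminates the It\^o stochastic integral in favor of a pure path-functional of $X^x$, since only once this is done can the BPR substitution be invoked to draw samples from the conditional law.
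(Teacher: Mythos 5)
Your proof is correct and follows essentially the same route as the paper: starting from the Fourni\'e et al.\ weight $\frac{1}{T}\int_0^T Y_t\,dW_t$, eliminating the stochastic integral by Stieltjes integration by parts against the bounded-variation process $Y$, substituting the martingale part $X_t^x-\int_0^t\alpha(X_s^x)\,ds$, randomizing the time integrals with independent uniforms, and encoding $Y_t=\exp(\int_0^t\alpha'(X^x_s)ds)$ and the Girsanov density as no-point events of independent Poisson processes. The only (immaterial) difference is that you randomize before invoking the Girsanov/rejection identity, whereas the paper applies Girsanov first; independence makes the two orderings equivalent.
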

We first recall basic results on Malliavin calculus (see Fourni\'e et al. \cite{fournier_etal_1999}) useful to detail our algorithm.
The process \((X^x_t, t\geq 0)\) is the unique solution of \eqref{eq:edssigma1} 
with \(X_0^x =x\).
We denote by \((Y^x_t,t\geq 0)\) the associated first variation process 
\[
Y^x_t := \dfrac{d}{dx} X^x_t.
\]
It solves the linear SDE
\begin{equation*}
\begin{aligned}
dY^x_t & = Y^x_t\alpha^\prime(X^x_t) dt\\
Y^x_0 & = 1.
\end{aligned}
\end{equation*}
The solution 
is
\begin{equation}\label{eq deriveeflot}
Y^x_t =  \exp\left(\int_0^t \alpha^\prime (X^x_s) ds\right).
\end{equation}
Furthermore, it is known that the Malliavin derivative \(D_t X^x_T\) satisfies
\begin{equation}
\begin{aligned}
\forall t\leq s,\quad dD_tX^x_s & = D_tX^x_s\alpha^\prime(X^x_s)ds\\
D_tX^x_t &= 1.
\end{aligned}
\end{equation}
We deduce that \(Y^x\) and \(D_tX^x\) are linked by the identity
	\begin{equation}\label{eq:flotmalliavin}
	D_tX^x_T = \dfrac{Y^x_T}{Y^x_t}.
	\end{equation}
So,
\begin{equation*}
Y^x_T = Y^x_t D_tX^x_T  = \int_0^T a(t)Y^x_t D_tX^x_Tdt,
\end{equation*}
where \(a\) is any \(L^2\) function such that \(\int_0^Ta(t)dt = 1\).
For instance, we use in this paper \(a(t)\equiv \frac{1}{T}\).

Following Fourni\'e et al. \cite{fournier_etal_1999}, and using classical results on Malliavin calculus (integration by parts formula, see~\cite{MR2200233}),
we obtain for \(\Psi\in C^1\)
\begin{align}
\nonumber\frac{d}{dx} \mathbb{E}\Psi(X^x_T) & =
\mathbb{E}
\left[
\Psi^\prime(X^x_T)Y^x_T
\right]\\
\nonumber& = \dfrac{1}{T}\mathbb{E} \left[\int_0^T\Psi^\prime(X^x_T)D_t(X^x_T) Y^x_t dt\right] = \dfrac{1}{T}\mathbb{E} \left[\int_0^TD_t(\Psi(X^x_T)) Y^x_t dt\right]\\
\nonumber& = \dfrac{1}{T}\mathbb{E} \left[\Psi(X^x_T) \delta(Y^x_t)\right] \\
\label{eq:expressiongenedelta}&= \dfrac{1}{T}\mathbb{E} \left[\Psi(X^x_T) \int_0^T Y^x_t dW_t\right].
\end{align}
\begin{rem}
This last identity remains true if \(\Psi\) is not a smooth function (see \cite{MR2200233}).
\end{rem}
After this short remind on Malliavin calculus theory, we now prove Proposition~\ref{prop:estimateursansbiaisdelta}.
\begin{proof}[Proof of Proposition~\ref{prop:estimateursansbiaisdelta}]
We use the one dimension setting to remove the stochastic integral in \eqref{eq:expressiongenedelta}
\begin{align}
\nonumber\int_0^T Y^x_t dW_t & = W_TY^x_T - W_0Y^x_0 - \int_0^T W_t dY^x_t \\
\label{eq:intydw}
& = W_TY^x_T - W_0Y^x_0 - \int_0^T W_t Y^x_t \alpha^\prime(X^x_t)dt.
\end{align}
The evaluation of the integral in the last term would introduce a bias.
To avoid it, one uses a classical identity. Namely, consider a stochastic process \((\gamma_t,0\leq t\leq T)\), we have
\begin{equation}\label{eq integunif}
\int_0^T \gamma_t dt = T\bar{\mathbb{E}}(\gamma_{UT})
\end{equation}
where \(U\) is random variable with uniform distribution on \([0,1]\), independent of \(\gamma\) and
\(\bar{\mathbb{E}}\) denotes the expectation with respect to \(U\).
	The drawback of the last expression is the
	increase of the variance. See \cite{mairetanre} for a discussion on this topic.
Using this property and \eqref{eq deriveeflot}, we obtain
\begin{multline*}
\frac{d}{dx} \mathbb{E}\Psi(X^x_T)  = \mathbb{E}\left[\dfrac{\Psi(X^x_T)}{T}\left(W_T\exp\int_0^T\alpha^\prime(X^x_s)ds
- W_0\right.\right.\\
\left.\left.- T W_{U_1T}\alpha^\prime(X^x_{U_1T})\exp\int_0^{U_1T}\alpha^\prime(X^x_s)ds\right)\right],
\end{multline*}
where \(U_1\) is a random variable independent of \((X^x_t,t\in[0,T])\) with uniform law on \([0,1]\).
As in Section~\ref{subsec:21}, we finally apply Girsanov Theorem 
\begin{multline*}
\frac{d}{dx} \mathbb{E}\Psi(X^x_T) = \tilde{C} \mathbb{E}\left[\dfrac{\Psi(\tilde{B}^x_T)}{T}\exp\left(-\dfrac{1}{2}\int_0^T\alpha^2(\tilde{B}^x_s) + \alpha^\prime(\tilde{B}^x_s)ds\right)\right.\\
\times\left(\left(\tilde{B}^x_T-\int_0^T\alpha(\tilde{B}^x_t)dt\right)
\exp\left(\int_0^T\alpha^\prime(\tilde{B}^x_s)ds\right)
- x\right.\\
- \left.\left. T\left(\tilde{B}^x_{U_1T} - \int_0^{U_1T}\alpha(\tilde{B}^x_s)ds\right)\alpha^\prime(\tilde{B}^x_{U_1T})\exp\int_0^{U_1T}\alpha^\prime(\tilde{B}^x_s)ds\right)\right],
\end{multline*}
where \((\tilde{B}^x_t,0\leq t\leq T)\) is a Brownian bridge with final distribution
given by \eqref{eq:distribfinBBrid} and \(U_1\sim\mathcal{U}(0,1)\) is independent of \(\tilde{B}^x\).
We use the same rejection procedure as in Section~\ref{subsec:21} to obtain
	\begin{multline*}
	\frac{d}{dx} \mathbb{E}\Psi(X^x_T) =  \mathbb{E}\left[\dfrac{\Psi(\tilde{B}^x_T)}{T}
	\times\left((\tilde{B}^x_T-\int_0^T\alpha(\tilde{B}^x_t)dt)
	\exp\left(\int_0^T\alpha^\prime(\tilde{B}^x_s)ds\right)
	- x\right.\right.\\
	- \left.\left. T\left(\tilde{B}^x_{U_1T} - \int_0^{U_1T}\alpha(\tilde{B}^x_s)ds\right)\alpha^\prime(\tilde{B}^x_{U_1T})\exp\int_0^{U_1T}\alpha^\prime(\tilde{B}^x_s)ds\right)\middle|N\cap D(\omega) = \emptyset\right].
	\end{multline*}
	\begin{multline*}
	\frac{d}{dx} \mathbb{E}\Psi(X^x_T) = 
	- \mathbb{E}\left[\dfrac{x\Psi(\tilde{B}^x_T)}{T} \middle| N\cap D(\omega) = \emptyset\right]\\
	+\mathbb{E}\left[\dfrac{\Psi(\tilde{B}^x_T)}{T}\left(\tilde{B}^x_T-T \alpha(\tilde{B}^x_{U_2T})\right)
	\exp\left(\int_0^T \alpha^\prime(\tilde{B}^x_s)ds\right) \middle| N\cap D(\omega) = \emptyset\right]\\
	-\mathbb{E}\left[
	\Psi(\tilde{B}^x_T)
	\left(\tilde{B}^x_{U_1T} - U_1T\alpha(\tilde{B}^x_{U_1U_2T})\right)
	\alpha^\prime(\tilde{B}^x_{U_1T})
	\right.\\
	\left.\exp\left(\int_0^{U_1T}\alpha^\prime(\tilde{B}^x_s)ds\right)
	 \middle| N\cap D(\omega) = \emptyset\right],
	\end{multline*}	
	where \(U_2\) is a random variable with uniform distribution on \([0,1]\),
	independent of \(\tilde{B}^x\), \(N\) and \(U_1\).

	It remains to remark that one again 
	interprets the term \label{pagepourlexpo} 
	\(
	\exp\left(\int_0^T \alpha^\prime(\tilde{B}^x_s)ds\right)\) 
	as the probability for a Poisson process to have no point in a domain.
	More precisely, we consider a Poisson process \(\hat{N}\) with unit intensity on \([0,T]\times[0,\hat{K}]\) (where \(\hat{K} = -\inf_{\mathbb{R}} \alpha^\prime\)), independent of 
	\(\tilde{B}^x\), \(N\), \(U_1\) and \(U_2\). We denote by \(\hat{D}\) the hypograph of \(-\alpha^\prime\).
	and by \(\hat{D}^1\) its restriction to \([0,U_1T]\times \mathbb{R}^+\).
	We finally have the unbiased estimator
		\begin{multline}\label{eq:deltaavecpoisson}
		\frac{d}{dx} \mathbb{E}\Psi(X^x_T) = 
		- \mathbb{E}\left[\dfrac{x\Psi(\tilde{B}^x_T)}{T} \middle| N\cap D(\omega) = \emptyset\right]\\
		+\mathbb{E}\left[\dfrac{\Psi(\tilde{B}^x_T)}{T}\left(\tilde{B}^x_T-T \alpha(\tilde{B}^x_{U_2T})\right)
		\mathbbm{1}_{\hat{N}\cap\hat{D}=\emptyset}
		\middle| N\cap D(\omega) = \emptyset\right]\\
		-\mathbb{E}\left[
			\Psi(\tilde{B}^x_T)
		\left(\tilde{B}^x_{U_1T} - U_1T\alpha(\tilde{B}^x_{U_1U_2T})\right)
		\alpha^\prime(\tilde{B}^x_{U_1T})
			\mathbbm{1}_{\hat{N}\cap\hat{D}^1=\emptyset}
		\middle| N\cap D(\omega) = \emptyset\right].
		\end{multline}	
\end{proof}

\begin{rem}
	Similarly to Remark~\ref{rem:relachehypsurphi}, we can generalise the previous estimator to function \(\varphi\) with a global lower bound and an upper bound only in one side.
	
	Furthermore, the same extension should be obtained if \(-\alpha^\prime\) has a global lower bound. In this case, we replace 
	\(-\alpha^\prime\) by \(-\alpha^\prime + \sup_{\mathbb{R}}(\alpha^\prime)\) in the definition of \(\hat{K}\), \(\hat{D}\) and \(\hat{D}^1\). We also replace \(\mathbbm{1}_{\hat{N}\cap\hat{D}=\emptyset}\) by \(\exp(T\sup_{\mathbb{R}}(\alpha^\prime))\mathbbm{1}_{\hat{N}\cap\hat{D}=\emptyset}\) and \(\mathbbm{1}_{\hat{N}\cap\hat{D}^1=\emptyset}\) by \(\exp(U_1T\sup_{\mathbb{R}}(\alpha^\prime))\mathbbm{1}_{\hat{N}\cap\hat{D}^1=\emptyset}\).
	
Our unbiased estimator can be extended if \(-\alpha^\prime\) has only a local upper bound in the same side as \(\varphi\) (i.e. \(\limsup_{y\rightarrow+\infty}\varphi(y)\) and \(\limsup_{y\rightarrow+\infty}(-\alpha^\prime(y))\) are both finite or \(\limsup_{y\rightarrow-\infty}\varphi(y)\) and \(\limsup_{y\rightarrow-\infty}(-\alpha^\prime(y))\) are both finite).
\end{rem}

\subsection{Unbiased estimator of the second derivative (Gamma)}\label{subsec:23}
In this part, we detail
an unbiased estimator of the second derivative \(\frac{d^2}{dx^2} \mathbb{E}\Psi(X^x_T)\).
We denote by \(Z^x_t\) the second variation process associated to \(X^x_t\)
\[
Z^x_t = \dfrac{d^2}{dx^2}X^x_t.
\]
It satisfies the linear stochastic differential equation
\[
Z^x_T  = \int_0^T \alpha^{\prime\prime}(X^x_s) \left(Y^x_s\right)^2
+ \alpha^\prime(X^x_s) Z^x_sds.
\]
The solution is
\begin{equation}
	Z^{x}_T =  Y^x_T \displaystyle\int_0^T \alpha^{\prime \prime}(X^x_s)Y^x_s ds.
\end{equation}
We also need the Malliavin derivative of the first variation process \(Y^x\). It satisfies
\[
D_tY^x_T = \int_0^T \alpha^{\prime\prime}(X^x_s) Y^x_s D_tX^x_s
+ \alpha^\prime(X^x_s) D_tY^x_s ds
\]
The solution is
\begin{equation}\label{eq:dermalflot2}
	D_tY^x_T = \dfrac{Y^x_T}{Y^x_t}\displaystyle\int_t^T \alpha^{\prime \prime} (X^x_s)Y^x_s ds.
\end{equation}
As in the previous section, we present the computation under the assumption that
\(\Psi\) is smooth. However, the final result remains true even if \(\Psi\) is
only assumed to be measurable and bounded (see \cite{fournier_etal_1999} for more details).
Using \eqref{eq:expressiongenedelta}, we formally derive with respect to \(x\) and obtain
\[
\dfrac{d^2}{dx^2} \mathbb{E}\Psi(X_T^x)
= \underbrace{\mathbb{E}\left[\dfrac{\Psi^\prime(X^x_T)}{T}Y^x_T\int_0^TY^x_{t}dW_{t}\right]}_{\Gamma_1(x)} + \underbrace{\mathbb{E}\left[\dfrac{\Psi(X^x_T)}{T}\int_0^T Z^x_{t}dW_{t}\right]}_{\Gamma_2(x)}.
\]
The main steps to obtain a tractable unbiased expression of \(\Gamma_2\)
are identical to the ideas used in Section~\ref{subsec:22}. We use the one dimensional setting to remove the stochastic integral and \eqref{eq:dermalflot2} to obtain
\begin{multline}
\Gamma_2(x)  =
\mathbb{E}\left[\dfrac{\Psi(X_T^x)}{T}\left(
W_T Y^x_T \int_0^T \alpha^{\prime \prime}(X^x_t)Y^x_t dt
- \int_0^T W_t \left(Y^x_t \right)^2
\alpha^{\prime \prime}(X^x_t)dt\right.\right.\\
-
\left.\left.\int_0^T
\int_0^t
W_t \alpha^{\prime }(X^x_t)
\alpha^{\prime \prime}(X^x_u) Y^x_u Y^x_t
du
dt\right)\right]
\end{multline}
To simplify \(\Gamma_1\), we apply the Malliavin integration by part formula and \eqref{eq:flotmalliavin}
\[
\begin{split}
\Gamma_1(x)
& = \dfrac{1}{T^2}\mathbb{E}\left[\int_0^T D_t(\Psi(X^x_T))Y^x_t\int_0^TY^x_s dW_s dt\right] \\
& = \dfrac{1}{T^2}\mathbb{E}\left[\Psi(X^x_T)\delta\left(Y^x_t\int_0^TY^x_s dW_s\right)\right].
\end{split}
\]
Finally, we have to make explicit the divergence operator. We apply
\cite[Prop. 1.3.3]{MR2200233} to obtain
\[
\begin{split}
\delta\left(Y^x_t\int_0^TY^x_s dW_s\right) & = \int_0^TY^x_s dW_s \delta\left(Y^x_t\right) -
\int_0^T D_t\left(\int_0^TY^x_s dW_s\right) Y^x_t dt\\
\mbox{ and }D_t\left(\int_0^TY^x_s dW_s\right) & = Y^x_t + \int_t^T D_tY^x_s dW_s.
\end{split}
\]
We again simplify the stochastic integral
\[
\begin{split}
\int_t^T D_tY^x_s dW_s  =& \left(D_tY^x_T\right) W_T
- \int_t^T
W_s
\alpha^{\prime \prime}(X^x_s)
Y^x_s
\dfrac{Y^x_s}{Y^x_t}
ds
\\
&
-
\int_t^T
W_s
\dfrac{\alpha^\prime (X^x_s) Y^x_s }{Y^x_t}
\int_t^s \alpha^{\prime \prime}(X^x_u) Y^x_u du
ds.
\end{split}
\]
Finally, denoting \(U_1\), \(U_2\) and \(U_3\)
three uniform independent random variables, independent of \(W\)
and 
using 
\eqref{eq integunif}, we obtain
\begin{multline*}
\dfrac{d^2}{dx^2} \mathbb{E}\Psi(X_T^x) = \mathbb{E}\left[
\Psi(X^x_T)
\left(
\dfrac{x^2}{T^2}
-
\dfrac{2x}{T^2}   W_T  Y^x_T
+
\dfrac{1}{T^2}
\left(W_T \right) ^2  \left(Y^x_T \right) ^2
\right. \right.
\\
\left. \left.
+
\dfrac{2 x}{T}
W_{U_1 T}
\alpha^{\prime } (X^x_{U_1 T}) Y^x_{U_1 T}
\right. \right.
\\
\left. \left.
-
\dfrac{1}{T}
\left(Y^x_{U_1 T}\right)^2
+
\left(
U_1
-
1
\right)
W_{U_1 T}
\alpha^{\prime \prime}(X^x_{U_1 T}) \left( Y^x_{U_1 T} \right)^2
\right. \right.
\\
\left. \left.
+
W_{U_1 T}
\alpha^{\prime } (X^x_{U_1 T})
W_{U_2 T}
\alpha^{\prime } (X^x_{U_2 T})
Y^x_{U_1 T} Y^x_{U_2 T}
\right. \right.
\\
\left. \left.
-
\dfrac{2 W_T}{T}
W_{U_1 T}
\alpha^{\prime } (X^x_{U_1 T} ) Y^x_T Y^x_{U_1 T}
+
W^x(T)
\left(
1 - U_1
\right)
\alpha^{\prime \prime} (X^x_{U_1 T} ) Y^x_T Y^x_{U_1 T}
\right. \right.
\\
\left. \left.
+
U_1 T
\left(
U_1 U_2
-
1
\right)
W_{U_1 T}
\alpha^\prime (X^x_{U_1 T} )
\alpha^{\prime \prime} (X^x_{U_1 U_2 T})
Y^x_{U_1 U_2 T}
Y^x_{U_1 T}
\right)
\right].
\end{multline*}
Similarly to Sections~\ref{subsec:21} and \ref{subsec:22}, we  
apply Girsanov theorem and \eqref{eq deriveeflot}.
We change in the previous expression
\begin{align*}
X_s^x &\rightarrow \tilde{B}^x_s\\
W_s &\rightarrow \tilde{B}_s^x - s\alpha(\tilde{B}_{U_ks}^x),
\end{align*}
with a uniform random variable \(U_k\), independent of the random objects
previously introduced.
\begin{multline*}
\dfrac{d^2}{dx^2} \mathbb{E}\Psi(X_T^x) = \mathbb{E}\left[
\Psi(\tilde{B}^x_T)
\left(
\dfrac{x^2}{T^2}
-
\dfrac{2x}{T^2}  \left(\tilde{B}^x_T - T\alpha(\tilde{B}^x_{U_3T})\right)  
\exp\left(\int_0^T\alpha^\prime(\tilde{B}^x_\theta)d\theta\right)\right. \right.\\
+
\dfrac{1}{T^2}
\left(\tilde{B}^x_T -  T\alpha(\tilde{B}^x_{U_3T})\right) 
\left(\tilde{B}^x_T -  T\alpha(\tilde{B}^x_{U_4T})\right)
\exp\left(\int_0^T2\alpha^\prime(\tilde{B}^x_\theta)d\theta\right)
\\
+
\dfrac{2 x}{T}
\left(\tilde{B}^x_{U_1 T}-U_1T\alpha(\tilde{B}^x_{U_1U_3T})\right)
\alpha^{\prime } (\tilde{B}^x_{U_1 T}) 
\exp\left(\int_0^{U_1 T}\alpha^\prime(\tilde{B}^x_\theta)d\theta\right)
\\
-
\dfrac{1}{T}
\exp\left(\int_0^{U_1T}2\alpha^\prime(\tilde{B}^x_\theta)d\theta\right)\\
+
\left(
U_1
-
1
\right)
\left(\tilde{B}^x_{U_1 T}-U_1T\alpha(\tilde{B}^x_{U_1U_3T})\right)
\alpha^{\prime \prime}(\tilde{B}^x_{U_1 T}) 
\exp\left(\int_0^{U_1T}2\alpha^\prime(\tilde{B}^x_\theta)d\theta\right)
\\
+
\left(\tilde{B}^x_{U_1 T}-U_1T\alpha(\tilde{B}^x_{U_1U_3T})\right)
\alpha^{\prime } (\tilde{B}^x_{U_1 T})
\left(\tilde{B}^x_{U_2 T} - U_2T\alpha(\tilde{B}^x_{U_2U_4T})\right)
\alpha^{\prime } (\tilde{B}^x_{U_2 T})\\
\times
\exp\left(\int_0^{U_1T}\alpha^\prime(\tilde{B}^x_\theta)d\theta\right)
\exp\left(\int_0^{U_2T}\alpha^\prime(\tilde{B}^x_\theta)d\theta\right)
\\
-
\dfrac{2}{T} \left(\tilde{B}^x_T - T\alpha(\tilde{B}^x_{U_3T})\right)
\left(\tilde{B}^x_{U_1T} - U_1T\alpha(\tilde{B}^x_{U_1U_4T})\right)
\alpha^{\prime } (\tilde{B}^x_{U_1 T} ) \\
\times\exp\left(\int_0^{T}\alpha^\prime(\tilde{B}^x_\theta)d\theta\right)
\exp\left(\int_0^{U_1T}\alpha^\prime(\tilde{B}^x_\theta)d\theta\right)
\\
+
\left(\tilde{B}^x_T - T\alpha(\tilde{B}^x_{U_3T})\right)
\left(
1 - U_1
\right)
\alpha^{\prime \prime} (\tilde{B}^x_{U_1 T} ) \\
\times\exp\left(\int_0^{T}\alpha^\prime(\tilde{B}^x_\theta)d\theta\right)
\exp\left(\int_0^{U_1T}\alpha^\prime(\tilde{B}^x_\theta)d\theta\right)
\\
+
U_1 T
\left(
U_1 U_2
-
1
\right)
\left(\tilde{B}^x_{U_1 T}-U_1T\alpha(\tilde{B}^x_{U_1U_3T})\right)
\alpha^\prime (\tilde{B}^x_{U_1 T} )
\alpha^{\prime \prime} (\tilde{B}^x_{U_1 U_2 T})\\
\left.
\left.
\exp\left(\int_0^{U_1U_2T}\alpha^\prime(\tilde{B}^x_\theta)d\theta\right)
\exp\left(\int_0^{U_2T}\alpha^\prime(\tilde{B}^x_\theta)d\theta\right)
\right)\middle| N\cap D = \emptyset
\right].
\end{multline*}
To conclude, each term on the form \(\exp(-\int_0^s \beta(\tilde{B}^x_\theta)d\theta)\)
is replaced by 
\(
\mathbb{E}\mathbbm{1}_{\{N^j\cap D^j = \emptyset\}}
	\)
	for appropriate Poisson processes \(N^j\) and hypograph \(D^j\)
	(similar terms are expressed in details p.~\pageref{pagepourlexpo}).\label{pagepourlegamma}

\subsection{Simulation of the Poisson Process} \label{subsec:pp_theo_simulation}
We have recalled in Section~\ref{subsec:21} the details of the algorithm developped in \cite{BPR06} to simulate exact paths of the solution
of \eqref{eq:edssigma1}. The main point is the following. Consider a
function \(\varphi\) with values in \([0,K]\), \(\exp(-\int_0^T\varphi(\tilde{B}^x_\theta)d\theta)\)
is the probability that \(N\cap D = \emptyset\), where
\(N\) is a Poisson process with unit intensity on \([0,T]\times[0,K]\)
independent of \(\tilde{B}^x\). The hypograph \(D\) of \(\varphi(\tilde{B}^x_\theta)\) is defined by \eqref{eq:defhypograph}.

For the rejection procedure, we simulate the Poisson process
\((t_1,y_1)\), \(\cdots\), \((t_{n(\omega)},y_{n(\omega)})\) and 
the Brownian bridge at the times \(t_1, \cdots, t_{n(\omega)}\).
If there exists \(j\in [1,n(\omega)]\) such that 
\(y_j < \varphi(\tilde{B}^x_{t_j})\), the Brownian bridge path is rejected.

In \cite{BPR06}, 
the Poisson process
is generated on \([0,T]\times[0,K]\). The result is \(((t_1,y_1),\cdots,(t_{n(\omega)},y_{n(\omega)}))\). Then, the authors simulate the Brownian bridge at time \(t_1\), \(t_2\), \(\cdots\), \(t_{n(\omega)}\) and evaluate if \(N\cap D\) is empty or not.  
In the present paper, we propose two variants of the algorithm. For both variants, 
immediatly after the simulation of one point \((t_j,y_j)\), we simulate \(\tilde{B}^x_{t_j}\). If \(y_j < \varphi(\tilde{B}^x_{t_j})\), we 
have to reject  the Brownian bridge path. So, we
do not need to simulate the full Poisson process \(N\) and stop immediatly the algorithm.
There is two simple variants for the simulation of the Poisson process: first, by increasing times (\(t_1 < t_2 < \cdots < t_{n(\omega)}\)). Second, by increasing ordinates (\(y_1<y_2<\cdots<y_{n(\omega)}\)). 
 This last variant aims to reject as fast as possible the Brownian bridge trajectory. Roughly speaking, 
smaller is the  ordinate, higher is the probability to be below \(\varphi(\tilde{B}^x)\).
We numerically compare the efficiency of the both variants in Section~\ref{sec_num_increasing_ordinate}.

\subsection{A truncated algorithm}\label{subsec:truncatedalgorithm}
The increasing ordinates variant should start, even if we do not know an explicit upper bound \(K\) to \(t\mapsto\varphi(\tilde{B}^x_t)\). 
We propose to extend the Beskos et al. algorithm to SDE with drift \(\alpha\), such that \(\limsup_{y\rightarrow-\infty}\varphi(y) = \limsup_{y\rightarrow+\infty}\varphi(y) = \infty \). According to \cite{BPR06}, \(\varphi=(\alpha^2 + \alpha^\prime)/2\).

For any \(L>0\), we denote \(N^L\) a Poisson process with unit intensity on 
\([0,T]\times[0,L]\).
Our truncated algorithm is 
stopped and we  
accept a path of the Brownian bridge if \(N^{\tilde{K}(\omega)}\cap D = \emptyset\), where  \(\tilde{K}(\omega) \leq \sup_{\theta\in[0,T]}\{\varphi(\tilde{B}^x_\theta)\}\).
Larger is \(\tilde{K}\), smaller is the probability to wrongly accept a path, but slower is the algorithm. A reasonnable choice of \(\tilde{K}(\omega)\) is
\[
\tilde{K}(\omega) \geq \max\{K,\varphi(\tilde{B}^x_T),\varphi(\inf_{0\leq s\leq T}\tilde{B}^x_s)\},
\]
where \(K\) is an a priori threshold.
Our algorithm is no more unbiased. However, Proposition~\ref{prop:controlerreur} gives an upper bound of the error 
in the approximation of \eqref{eq:prix}.

\subsection{Theoretical control of the error} \label{subsec:control_error}
\begin{prop}
	\label{prop:controlerreur}
	Let \(X_T^x\) solution of \eqref{eq:edssigma1} and \(X_T^{x,K}\) its 
	approximation obtained by the truncated rejection procedure 
	presented in Section~\ref{subsec:truncatedalgorithm}. Precisely, the Brownian bridge path is accepted if there is no point of an independent Poisson process on \([0,T]\times[0,K]\) in the hypograph \(D\) of \(\varphi\) (given by \eqref{eq:defhypograph}). Then:
\begin{enumerate}[a.]
	\item 
	\begin{multline}
	\label{eq:controleerreur}
	\left|\mathbb{E}\Psi\left(X_T^x\right) - \mathbb{E}\Psi\left(X^{x,K}_T\right)\right|\\
	\leq 
	\sqrt{\mathbb{E} \left[\Psi^2 \left(  \tilde{B}^x_T \right)\right]}
	\left(\dfrac{\mathbb{P}\left(\sup_{0\leq\theta\leq T}\varphi(\tilde{B}^x_\theta)>K\right)}{p_K\sqrt{p_\infty}} + \dfrac{\sqrt{\mathbb{P}\left(\sup_{0\leq\theta\leq T}\varphi(\tilde{B}^x_\theta)>K\right)}}{p_K} \right),
	\end{multline}
	where \(p_K\)  denotes the probability to accept a Brownian bridge path with the truncated algorithm at level \(K\),
	\begin{equation}\label{eq:probaaccepttruncK}
	p_K = \mathbb{E}\left[\exp\left(-\int_0^TK\wedge\varphi(\tilde{B}^x_\theta)d\theta\right)\right]
	\end{equation}
	and \(p_\infty\) is given by
	\begin{equation}\label{eq:probaacceptfull}
	p_\infty = \mathbb{E}\left[\exp\left(-\int_0^T\varphi(\tilde{B}^x_\theta)d\theta\right)\right].
	\end{equation}
	\item If moreover \(\Psi\) is bounded, 
		\begin{equation}
		\label{eq:controleerreurpsibornee}
		\left|\mathbb{E}\Psi\left(X_T^x\right) - \mathbb{E}\Psi\left(X^{x,K}_T\right)\right|
		\leq 
\dfrac{2\left\|\Psi\right\|_\infty}{p_K}\mathbb{P}\left(\sup_{0\leq\theta\leq T}\varphi(\tilde{B}^x_\theta)>K\right).
		\end{equation}
	\end{enumerate}

\end{prop}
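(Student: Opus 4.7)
The plan is to write both expectations as ratios of expectations over the Brownian bridge, using the Poisson-process representation from Section~\ref{subsec:21}. Introducing
\[
F_\infty(\omega)=\exp\!\Big(-\!\int_0^T\!\varphi(\tilde B^x_\theta)\,d\theta\Big),\qquad F_K(\omega)=\exp\!\Big(-\!\int_0^T\!K\wedge\varphi(\tilde B^x_\theta)\,d\theta\Big),
\]
and conditioning on the bridge, the probability that no point of the Poisson process falls in the (possibly truncated) hypograph gives
\[
\mathbb{E}\Psi(X_T^x)=\frac{1}{p_\infty}\mathbb{E}\!\left[\Psi(\tilde B^x_T)F_\infty\right],\qquad \mathbb{E}\Psi(X_T^{x,K})=\frac{1}{p_K}\mathbb{E}\!\left[\Psi(\tilde B^x_T)F_K\right].
\]
This rewriting is the first step.

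Next I perform the elementary algebraic decomposition
\[
\frac{F_\infty}{p_\infty}-\frac{F_K}{p_K}=\frac{F_\infty-F_K}{p_K}+\frac{p_K-p_\infty}{p_\infty\,p_K}\,F_\infty,
\]
which splits the error into a ``numerator'' term and a ``denominator'' term. The crucial observation I would then record is that both $F_\infty-F_K$ and $p_K-p_\infty$ are controlled by the exceedance event $A=\{\sup_{0\le\theta\le T}\varphi(\tilde B^x_\theta)>K\}$: indeed $F_\infty\le F_K\le 1$ with equality on $A^c$, so $0\le F_K-F_\infty\le\mathbbm{1}_A$ pointwise, and therefore $|p_K-p_\infty|\le\mathbb{P}(A)$.

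For part b, I apply $|\Psi|\le\|\Psi\|_\infty$, the bound $\mathbb{E}|F_K-F_\infty|\le\mathbb{P}(A)$, the bound $\mathbb{E} F_\infty\le p_\infty$, and $|p_K-p_\infty|\le\mathbb{P}(A)$ in the decomposition; the two pieces each contribute $\|\Psi\|_\infty\mathbb{P}(A)/p_K$, giving \eqref{eq:controleerreurpsibornee}. For part a, I instead apply Cauchy--Schwarz to each term: for the first term I use $(F_K-F_\infty)^2\le\mathbbm{1}_A$ (again because $|F_K-F_\infty|\le\mathbbm{1}_A$) to obtain $\sqrt{\mathbb{E}\Psi^2(\tilde B^x_T)}\sqrt{\mathbb{P}(A)}/p_K$; for the second I use $F_\infty^2\le F_\infty$ so $\sqrt{\mathbb{E} F_\infty^2}\le\sqrt{p_\infty}$, combined with $|p_K-p_\infty|\le\mathbb{P}(A)$, yielding $\sqrt{\mathbb{E}\Psi^2(\tilde B^x_T)}\,\mathbb{P}(A)/(p_K\sqrt{p_\infty})$. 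Summing the two contributions produces \eqref{eq:controleerreur}.

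There is no real obstacle: the whole argument is a bookkeeping exercise once one writes the two expectations as ratios and isolates $F_K-F_\infty$. The only slightly non-obvious step is choosing $(F_K-F_\infty)^2\le\mathbbm{1}_A$ and $F_\infty^2\le F_\infty$ so that the Cauchy--Schwarz bound produces a $\sqrt{p_\infty}$ rather than $p_\infty$ or $1$ in the denominator, which is what makes the final expression consistent with the statement.
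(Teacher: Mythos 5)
Your proposal is correct and follows essentially the same route as the paper: rewrite both expectations as ratios over the bridge, use the same two-term decomposition, bound the second factor by Cauchy--Schwarz with $F_\infty^2\le F_\infty$ and $(F_K-F_\infty)^2\le F_K-F_\infty\le\mathbbm{1}_A$, and control everything by $\mathbb{P}(A)$. The only cosmetic difference is that you bound $(F_K-F_\infty)^2$ by $\mathbbm{1}_A$ directly rather than via $\mathbb{E}[F_K-F_\infty]=p_K-p_\infty\le\mathbb{P}(A)$, and you spell out part b, which the paper leaves to the reader.
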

\begin{rem}
	\begin{enumerate}
	\item If \(\limsup_{y\rightarrow-\infty} \varphi(y) = \limsup_{y\rightarrow+\infty} \varphi(y) = +\infty\),
	for any Brownian bridge, the probability 
	to wrongly accept the trajectory is
	positive. 
	However, Proposition~\ref{prop:controlerreur} gives a control
	of the error.
	\item The result of Proposition~\ref{prop:controlerreur} still holds true
	if we use the variant of the algorithm with the simulation of the minimum of the Brownian bridge (see point \ref{extensionavecmin}) of Remark~\ref{rem:relachehypsurphi} and \cite{BPR06} ).
	Numerical results for this variant are given in Section~\ref{sec_num_increasing_ordinate}.
	\item If we have a control of the asymptotic behavior of \(\varphi\)
	(e.g. a polynomial growth at infinity), we deduce that the error 
	of truncation decreases exponentially fast to \(0\) with \(K\).
	\end{enumerate}
\end{rem}

\begin{proof}
	We denote by \(N^K\) a Poisson process on \([0,T]\times[0,K]\)
	and by \(N\) a Poisson process on \([0,T]\times\mathbb{R}_+\).
	Thanks to \eqref{eq:egalloiavecpoissonhypo}, we have
\begin{align*}
	\mathbb{E}\Psi\left(X_T^x\right) &= \mathbb{E}\left[\Psi(\tilde{B}^x_T)\middle| N\cap D(\omega) = \emptyset\right]\\
	& = \dfrac{\mathbb{E}\left[\Psi(\tilde{B}^x_T)\exp\left(-\int_0^T\varphi(\tilde{B}^x_\theta)d\theta\right)\right]}{\mathbb{E}\left[\exp\left(-\int_0^T\varphi(\tilde{B}^x_\theta)d\theta\right)\right]}\\
\mathbb{E}\Psi\left(X^{x,K}_T\right) &= \mathbb{E}\left[\Psi(\tilde{B}^x_T)\middle| N^K\cap D(\omega) = \emptyset\right],\\
& = \dfrac{\mathbb{E}\left[\Psi(\tilde{B}^x_T)\exp\left(-\int_0^TK\wedge\varphi(\tilde{B}^x_\theta)d\theta\right)\right]}{\mathbb{E}\left[\exp\left(-\int_0^TK\wedge\varphi(\tilde{B}^x_\theta)d\theta\right)\right]}.
\end{align*}
We denote by \(p_K\) and \(p_\infty\) the probabilities to
accept a Brownian bridge path with the truncated algorithm at level \(K\)
and with the exact algorithm (see \eqref{eq:probaaccepttruncK} and \eqref{eq:probaacceptfull}).
Thus, a control of the error is
\begin{align*}
\mathrm{err}_K & = \left|\mathbb{E}\Psi\left(X_T^x\right) - \mathbb{E}\Psi\left(X^{x,K}_T\right)\right|\\
& \leq 
\left|
\dfrac{1}{p_{\infty}}
-
\dfrac{1}{p_K}
\right|
\mathbb{E} \left[ \left|\Psi \left( \tilde{B}^x_T\right)\right| \exp\left(-\int_0^T\varphi(\tilde{B}^x_\theta)d\theta\right)  \right]\\
&+
\dfrac{1}{p_K}
\mathbb{E} \left[ \left|\Psi \left(  \tilde{B}^x_T \right)\right|
\left( 
\exp\left(-\int_0^TK\wedge\varphi(\tilde{B}^x_\theta)d\theta\right) -\exp\left(-\int_0^T\varphi(\tilde{B}^x_\theta)d\theta\right) 
\right)
\right].
\end{align*}
We apply Cauchy-Schwarz inequality and use that \(x^2\leq x\) for \(0\leq x\leq 1\)
\begin{align*}
\mathrm{err}_K  \leq &
\dfrac{p_K-p_\infty}{p_Kp_\infty}
\sqrt{\mathbb{E} \left[ \Psi^2 \left( \tilde{B}^x_T\right)\right]}
\sqrt{p_\infty}\\
&+
\dfrac{1}{p_K}
\sqrt{\mathbb{E} \left[\Psi^2 \left(  \tilde{B}^x_T \right)\right]}
\sqrt{
	\mathbb{E}
	\left[
	\exp\left(-\int_0^TK\wedge\varphi(\tilde{B}^x_\theta)d\theta\right) 
- 
\exp\left(-\int_0^T\varphi(\tilde{B}^x_\theta)d\theta\right) 
\right]}\\
\leq & \sqrt{\mathbb{E} \left[\Psi^2 \left(  \tilde{B}^x_T \right)\right]}
\left(\dfrac{p_K-p_\infty}{p_K\sqrt{p_\infty}} + \dfrac{\sqrt{p_K-p_\infty}}{p_K} \right).
\end{align*}
We finally observe that 
\[
p_K-p_\infty\leq \mathbb{P}\left(\sup_{0\leq\theta\leq T}\varphi(\tilde{B}^x_\theta)>K\right).
\]
The proof under the assumption that \(\Psi\) is bounded is
very similar and simpler.
It is left to the reader.
\end{proof}

\subsection{Extension in finite dimension}

One can easily extend the algorithm to a multi-dimensional setting
under restrictive assumptions. For instance, assume that 
the drift  derives from a potential, that is, there exists a function \(\xi:\mathbb{R}^d\mapsto\mathbb{R}^d\) such that
\[
X_T = X_0 + \int_0^T \nabla \xi(X_s)ds + W_T.
\]
Girsanov Theorem gives the Radon-Nikodym derivative:
\[
\exp\left(\int_0^T\sum_{j=1}^d\frac{\partial}{\partial x_j} \xi(W_s)dW^j_s - \int_0^T \sum_{j=1}^d\left(\frac{\partial}{\partial x_j}\xi(W_s)\right)^2ds\right).
\]
Similarly to the one-dimensional case, 
our assumption allows one to introduce the antiderivative of \(\nabla\xi\) 
in order to remove the stochastic integral in the previous expression:
\[
\xi(W_T) = \xi(W_0) + \int_0^T\sum_{j=1}^d\frac{\partial}{\partial x_j} \xi(W_s)dW^j_s + \dfrac{1}{2}\int_0^T\sum_{j=1}^d\frac{\partial^2}{\partial x_j^2} \xi(W_s)ds.
\]
So, as in the one-dimensional case, we simulate a Brownian bridge with 
final distribution
\[
\mathbb{P}(\tilde{B}^x_T\in d\theta)  = C\exp\left(- \dfrac{\sum_j(\theta_j-x_j)^2}{2T}+\xi(\theta) \right)d\theta.
\]
We only have to replace the function \(\varphi\) in \eqref{eq:defdephi} by \(\frac{1}{2}  \sum_{j=1}^d (\frac{\partial}{\partial x_j}\xi(y))^2 + \frac{\partial^2}{\partial x_j^2} \xi(y) \).

The evaluation of the derivatives is more difficult to extend.
Equation \eqref{eq:expressiongenedelta} has an equivalent in any
finite dimension \cite{fournier_etal_1999}. We can also write
\eqref{eq:intydw} but, for instance, it is not 
easy to write the transform of each component of the vector \(Y_T^xW_T\) as the exponential of an integral.

\section{The detailed algorithm for the CIR Model}\label{sec:cirmodel}

	This section is devoted to the extension of our algorithm to the simulation of the Cox Ingersoll Ross (CIR) process, a popular model in finance (for short rates or volatility for stochastic volatility model on asset, etc.)     This process satisfies
\begin{equation}\label{eq:CIR}
V_T = V_0 + \int_0^T \kappa\left(V_\infty - V_t\right) dt + \varepsilon\int_0^T\sqrt{V_t}dW_t
\end{equation}
where \(\kappa\), \(V_\infty\) and \(\varepsilon\) are fixed constants.
Usually, the parameter \(d = \frac{4\kappa V_\infty}{\varepsilon^2}\) 
is  called the degree of the CIR process.
It is known  that 
\(\mathbb{P}(\inf_{\theta\in[0,T]}V_\theta > 0) = 1\)
iff \(d \geq 2\) (see e.g. \cite{livreAlfonsi}).
We assume it is fulfilled.
We apply the Lamperti transform to the process \(V\), 
that is we set
\[
X_t = \frac{2\sqrt{V_t}}{\varepsilon} =: \eta(V_t).
\]
The process \(X\) satisfies the SDE
\begin{align}
\nonumber
dX_t & = \eta^\prime(V_t)dV_t + \frac{1}{2}\eta^{\prime\prime}(V_t)d\left\langle V\right\rangle_t\\
\nonumber
& =\frac{1}{\varepsilon \sqrt{V_t}} \left(\kappa (V_{\infty} - V_t ) dt + \varepsilon \sqrt{V_t} dW_t \right) - \frac{\varepsilon^{2} V_t}{4\varepsilon V_t^{3/2}}dt\\
& = \left(\frac{1}{X_t} \left( \frac{2 \kappa V_{\infty}}{\varepsilon ^{2}}
- \frac {1}{2}\right) - \frac{\kappa X_t}{2} \right) dt + dW_t.
\label{eq:defxcir}
\end{align}
It is an SDE of type \eqref{eq:edssigma1} 
with
\[
\alpha(y) =  \frac{1}{y} \left( \frac{2 \kappa V_{\infty}}{\varepsilon ^{2}} - \frac {1}{2}\right) - \frac{\kappa y}{2} \mbox{ for } y > 0.
\]
The associated function \(\varphi\) defined by \eqref{eq:defdephi} is
\begin{equation*}
  \varphi(y) = \left(\left(\frac{2\kappa V_\infty}{\varepsilon^2} - 1\right)^2 - \frac{1}{4} \right)
\frac{1}{2y^2} + \frac{\kappa^2}{8} y^2 - \frac{\kappa^2V_\infty}{\varepsilon^2} \mbox{ for } y > 0.
\end{equation*}
The function \(\varphi\) is bounded below on $(0,+\infty)$ iff
\[
\left(\frac{2\kappa V_\infty}{\varepsilon^2} - 1\right)^2 \geq \frac{1}{4}
\]
or equivalently that the degree \(d\)  of the CIR
satisfies
\(
d\in (0,1]\cup [3,\infty).
\)
In this paper, we assume \(d\geq 3\).
\begin{rem}
	In Section~\ref{sec:section2}, the drift \(\alpha\) is defined on
	\(\mathbb{R}\). However, a classical Feller test proves that 
	the process \(X^x\), solution of \eqref{eq:defxcir} starting from \(x>0\), never hits \(0\) almost surely. Formally, if we put \(\alpha(y) = \varphi(y) = +\infty\) for all \(y\leq 0\), the Brownian bridge paths
	\(\tilde{B}\) taking values in \(\mathbb{R}_-\) are almost surely rejected.
\end{rem}
\subsection{Final Value \label{final_value_sim_cir}}
In the first step, we generate 
the final value \(\tilde{B}^x_T\) according to \eqref{eq:distribfinBBrid}. Its
density is
\[
h(y)  = R y^c \exp\left(-\dfrac{\left(y-\hat{x}\right)^2}{2\sigma^2}\right)\mathbbm{1}_{y\geq 0}
\]
with:
\begin{align*}
c  & = \dfrac{2\kappa V_\infty}{\epsilon^2} - \dfrac{1}{2}, &
\hat{x}  & =2\sigma^2 \frac{x}{2T}, &
x  & = \dfrac{2}{\epsilon}\sqrt{V_0},
& \sigma^2 & = \frac{1}{\dfrac{\kappa}{2}+\dfrac{1}{T}}
\end{align*}
and \(R\) is a normalisation.
Setting \(
\bar{x}  =  \frac{ \hat{x} + \sqrt{\hat{x}^2 + 4 c \sigma^2} }{2}
\), there exists \(C>0\) such that
\[
\forall y, \quad h(y)\leq C \exp\left(-\dfrac{\left(y - \bar{x}\right)^2}{2\sigma^2}\right),
\]
and we use the classical rejection procedure for random variables.
\subsection{Simulation of the minimum}
The second step consists in generating the random variables \((m,t_m)\), where
\[
m  = \inf_{0\leq t\leq T}\left\{\tilde{B}^x_t \middle| \tilde{B}_0 = x ,\tilde{B}^x_T = Y \right\} \quad\mbox{ and }\quad 
 \tilde{B}^x_{t_m}  = m.
\]
This law is known (see for instance Karatzas-Shreve~\cite[p. 102]{KS91})
\[
\mathbb{P}\left[m\in d\alpha , {t_m} \in ds \middle| \tilde{B}^x_T = Y\right] =
\dfrac{\alpha(\alpha-Y)}{\sqrt{s^3(T - s)^3}}\exp\left(-\dfrac{\alpha^2}{2s}-\dfrac{(\alpha - Y)^2}{2(T - s)}\right)d\alpha ds.
\]
In Beskos et al. \cite[Prop. 2]{BPR06}, the detailled random variables used to
simulate \((m,{t_m})\) are given: the authors only need to simulate  uniform, exponential and Inverse Gaussian distributions
(see Devroye~\cite[p.149]{devroye} for an efficient way to simulate Inverse Gaussian distributions).
\subsection{Simulation of the Poisson process }
We apply the method detailed in Section~\ref{subsec:pp_theo_simulation}.
We generate
\(z_1\sim\mathcal{E}(T)\), \(t_1\sim\mathcal{U}(0,T)\), \(\tilde{B}^x_{t_1}\) conditioned by
\(\tilde{B}^x_0 ,\tilde{B}^x_T , m, {t_m}\). If \(\varphi(\tilde{B}^x_{t_1}) > z_1\), we reject the
trajectory. Else, we generate \(z_2 - z_1 \sim \mathcal{E}(T)\), \(t_2 \sim \mathcal{U}(0,T)\),
\(\tilde{B}^x_{t_2}\) conditioned by \(\tilde{B}^x_0, \tilde{B}^x_{t_1}, \tilde{B}^x_T , m, {t_m}\). If
\(\varphi(\tilde{B}^x_{t_2}) > z_2\), we reject the trajectory, etc.

\subsection{Stopping condition \label{cir_stopping_condition}}
In this example, \(\sup_{y\geq m(\omega)}\varphi(y) = +\infty\). 
So, we use the truncated algorithm presented in Section~\ref{subsec:truncatedalgorithm}.
We simulate the Poisson process on \([0,T]\times[0,\tilde{K}(\omega)]\)
with 
\begin{equation}\label{eq:lienktildek}
\tilde{K}(\omega) \geq \max\{K,\varphi(\tilde{B}^x_T),\varphi(m)\},
\end{equation}
where \(K\) is a fixed a priori threshold.

\section{Numerical Results}\label{sec:numerical}
In this Section, we present the numerical results.
We first apply the algorithm to 
an academic example related to Orstein-Uhlenbeck process (Section~\ref{subsec:MOU}). 
The drift \(\alpha\) is constructed such that its associated 
function \(\varphi\)
satisfies 
\(\limsup_{y\rightarrow\infty}\varphi(y)<\infty\).
In Section~\ref{subsec:surem}, the drift \(\alpha\) is
constructed in such a way that
the associated function \(\varphi\) satisfies  \(\limsup_{y\rightarrow-\infty}\varphi(y) = \limsup_{y\rightarrow\infty}\varphi(y)=\infty\).
Finally, Section~\ref{subsec:cir_an} is devoted to the 
CIR process (see Section~\ref{sec:cirmodel}), i.e. an example with a non Lipschitz continuous drift \(\alpha\).

We use the algorithms to approximate  quantities \eqref{eq:prix}, \eqref{eq:delta} and \eqref{eq:gamma} for smooth and nonsmooth functions \(\Psi\). We compare the efficiency of our algorithm to the use 
of a classical Euler scheme and finite difference approximation
of the derivatives.
\subsection{An academic example: a modified Ornstein Uhlenbeck}\label{subsec:MOU}
\subsubsection{Definition} \label{sec:turem_drift}
We introduce the process \((X^x_t,t\geq 0)\), solution of
\begin{equation}\label{eq:defturem}
dX_t^x  = \left(-M \left(X_t^x + \dfrac{1}{2}\right) \mathbbm{1}_{X_t^x \leq -1} +\frac{M}{2} (X_t^x)^2 \mathbbm{1}_{-1 \leq X_t^x \leq 0}\right)dt + dW_t,
\end{equation}
where \(M\geq 1/2\) is a fixed parameter.
The process \(X^x\) is solution of an SDE of type \eqref{eq:edssigma1} with a  drift \(\alpha\in C^1(\mathbb{R})\).
Its associated function \(\varphi\) is
\[
\varphi(y) = \begin{cases}
0 \quad &\mbox{ if } \quad y\geq 0\\
\dfrac{M^2y^4}{8} + \dfrac{My}{2} \quad &\mbox{ if }\quad -1 \leq y\leq 0\\
\dfrac{M^2}{2}\left(y +\dfrac{1}{2}\right)^2  - \dfrac{M}{2} \quad &\mbox{ if }\quad  y\leq -1.
\end{cases}
\]
It satisfies \[
\lim_{y \rightarrow-\infty}\varphi(y) = +\infty \quad \quad and 
\quad\quad
\lim_{y \rightarrow+\infty}\varphi(y) < \infty.
\]
Then, SDE \eqref{eq:defturem} satisfies the assumptions made in Section~\ref{sec:section2} and 
we are in position to apply our unbiased algorithm to approximate 
 \(\mathbb{E}(\Psi(X_T^x))\), \(\frac{d}{dx}\mathbb{E}(\Psi(X_T^x))\) and 
 \(\frac{d^2}{dx^2}\mathbb{E}(\Psi(X_T^x))\) for general functions \(\Psi\).

\subsubsection{Algorithmic optimisation of computation time} \label{sec_num_increasing_ordinate}
We have discussed in Section~\ref{subsec:pp_theo_simulation} two variants to simulate the Poisson process N used to reject (or accept) the Brownian bridge paths.
\begin{itemize}
	\item \textbf{variant 1} by increasing times: a realisation of \(N\), say   \(\{(t_1,y_1), \cdots,\)
	\( (t_{n(\omega)},y_{n(\omega)})\}\), satisfies
	\(t_1<t_2<\cdots<t_{n(\omega)}\).
		\item \textbf{variant 2} by increasing ordinates: \(\{(t_1,y_1), \cdots, (t_{n(\omega)},y_{n(\omega)})\}\) satisfies
		\(y_1<y_2<\cdots<y_{n(\omega)}\).
\end{itemize}
In this part, we compare the efficiency of the two variants.
They only differ by the computation time used to accept a Brownian bridge path.
Figure~\ref{fig:ornstein} represents the time of simulation as a function of the final time \(T\). The size of the sample is \(N_{\textrm{MC}} = 1e6\) and the parameters are \(x=0.04\), \(M=0.5\).

We observe that the  times of simulation are very close for 
small values of \(T\); they both increase exponentially and,  clearly, the rate is smaller for variant 2 than variant 1.
\begin{figure}[ht]
	\begin{center}
		\includegraphics[height=13cm,angle=-90]{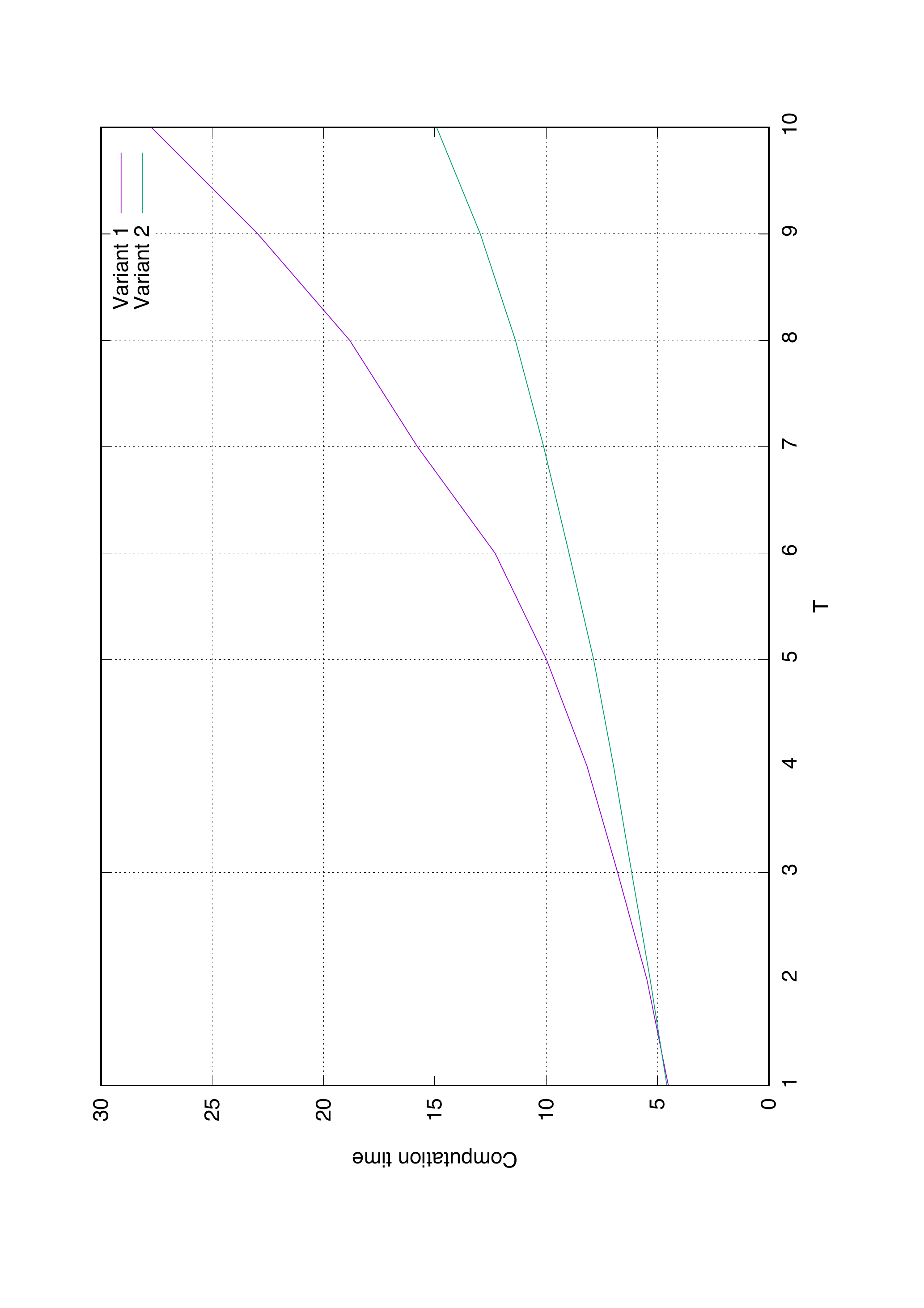}
		\caption{Comparison of the  times of simulation for two methods to generate the
			Poisson process: variant 1 (increasing times) and variant 2 (increasing ordinates). Times of simulation (in seconds) are given in function of the final time \(T\).
			The process \(X^x\) solves \eqref{eq:defturem}. The parameters are \(M=0.5\),  \(x=0.04\) and \(N_{\textrm{MC}} = 1e6\).}
		\label{fig:ornstein}
	\end{center}
\end{figure}

We then fix the final time \(T=1\) and  change the parameter \(M\)
in the drift \(\alpha\) (see \eqref{eq:defturem}). The times of simulation
of a sample of size \(N_{\textrm{MC}} = 1e6\) are given in Table.~\ref{tab:ratio_tu_time_computation}.
Again, the variant 2 is faster than
variant 1.

\begin{table}[ht!]
\begin{center}
\begin{tabular}{|c|c|c|c|}
 \hline
 \(M\) & time (var. 1)  & time (var. 2) & Ratio
 \tabularnewline
\hline
 1 & 7.79 & 7.74 & 1.01 
 \tabularnewline
\hline
 10 & 31 & 14 & 2.21
 \tabularnewline
\hline
 100 & 254591 & 5148 & 49.4
 \tabularnewline
\hline
\end{tabular}
 \caption{Comparison of the times (in sec.) of simulation for variant 1 (increasing times) and variant 2 (increasing ordinates). We simulate \(N_{\textrm{MC}} = 1e6\) values of \(X_T\) (\(x=0\), \(T=1\), \(M = 1, 10, 100\)).}
\label{tab:ratio_tu_time_computation}
\end{center}
\end{table}

\subsubsection{A comparison of approximations of sensitivities} \label{sec:tuefd_vs_turem}
The unbiased evaluation of the sensitivities \(\frac{d}{dx} \mathbb{E}\Psi(X_T^x)\) and \(\frac{d^2}{dx^2} \mathbb{E}\Psi(X_T^x)\) are the main new results of the paper. 
They are themselves interesting theoretical results. 
However, we aim to compare their efficiency to classical numerical methods.

\paragraph*{Our unbiased estimator }
We apply a classical Monte Carlo procedure to evaluate the expressions \eqref{eq:egalloiavecpoissonhypo}, \eqref{eq:deltaavecpoisson} and
the expression p.\pageref{pagepourlegamma} for the second derivative.
We denote the Monte Carlo estimators by
 \[
 \tilde{P}_\Psi(N_{\textrm{MC}}),\quad\quad  \tilde{\Delta}_\Psi(N_{\textrm{MC}}), \quad \quad
 \tilde{\Gamma}_\Psi(N_{\textrm{MC}}).
 \]
There is a unique source of error: the statistical error. It is 
only related to the variance of the expressions we evaluate.
In Table~\ref{tab:turem_2e10}, we present the results for three functions \(\Psi\), two are smooth and the last one is discontinuous.
We put in brackets
 the estimated statistical standard deviation 
 with a sample of size \(N_{\textrm{MC}}=2e10\).

\paragraph*{Standard estimator using Euler scheme and finite difference approximation} We simulate \(X_T^{x,\delta,1}, \cdots, X_T^{x,\delta,N_{\textrm{MC}}}\),  \(N_{\textrm{MC}}\) independent realisations of the explicit Euler scheme (with time step \(\delta\)) to approximate the solution \(X_T^x\) of \eqref{eq:edssigma1}.
The derivatives are approximated with a finite difference scheme. 
That is,
we simulate \(X_T^{x-dx,\delta,1}, \cdots, X_T^{x-dx,\delta,N_{\textrm{MC}}}\)
and \(X_T^{x+dx,\delta,1}, \cdots, X_T^{x+dx,\delta,N_{\textrm{MC}}}\) and use the estimators
\begin{align*}
  \hat{P}_\Psi(N_{\textrm{MC}},\delta) :=& \dfrac{1}{N_{\textrm{MC}}}\sum_{k=1}^{N_{\textrm{MC}}} \Psi(X_T^{x,\delta,k}) \\
  \approx&\mathbb{E}(\Psi(X_T^x)),\\
   \hat{\Delta}_\Psi(N_{\textrm{MC}},\delta,dx) := &
\dfrac{1}{2dxN_{\textrm{MC}}}\left( \sum_{k=1}^{N_{\textrm{MC}}} \Psi(X_T^{x+dx,\delta,k}) - 
 \sum_{k=1}^{N_{\textrm{MC}}} \Psi(X_T^{x-dx,\delta,k})
  \right)  \\
\approx&  \frac{d}{dx}\mathbb{E}(\Psi(X_T^x)),
  \\
  \hat{\Gamma}_\Psi(N_{\textrm{MC}},\delta,dx) := &
  \dfrac{1}{(dx)^2N_{\textrm{MC}}}\left( \sum_{k=1}^{N_{\textrm{MC}}} \Psi(X_T^{x+dx,\delta,k}) - 
  2\sum_{k=1}^{N_{\textrm{MC}}} \Psi(X_T^{x,\delta,k})\right.\\
& \quad \quad\quad \quad \left. +\sum_{k=1}^{N_{\textrm{MC}}} \Psi(X_T^{x-dx,\delta,k})
  \right) \\
\approx&    \frac{d^2}{dx^2}\mathbb{E}(\Psi(X_T^x))
\end{align*}
These approximations are also very simple to simulate and evaluate. 
We now have two sources of error:
\begin{itemize}
	\item biases due to the parameters \(\delta\) and \(dx\).
	\item the statistical error, related to the variance of the quantities we estimate with a Monte Carlo procedure.
\end{itemize}
In practice, we have to carefully choose \(N\), \(\delta\) and \(dx\). 
The best choice is obtained if the bias is close to the statistical error.
It is not easy to reach such a balance: we do not know the bias.

We have chosen two set of parameters,  \(N_{\textrm{MC}} = 1e9\), \(\delta = 0.1\)
and \(dx=0.4\) in Table~\ref{tab:tuefd_step_1em1_stepdf_4em1} and \(N_{\textrm{MC}} = 5e7\), \(\delta = 0.005\) and \(dx=0.1\) in 
Table~\ref{tab:tuefd_step_5em3_stepdf_2em1}.

\begin{table}[ht!]
	\begin{center}
		\begin{tabular}{|c|c|c|c|}
			\hline
			&&&\tabularnewline
			\( \Psi(y)\) & \(\tilde{P}_\Psi(N_{\textrm{MC}})\)
			& \(\tilde{\Delta}_\Psi(N_{\textrm{MC}})\)
		& 
			 \(\tilde{\Gamma}_\Psi(N_{\textrm{MC}})\)
			\tabularnewline&&&\tabularnewline
			\hline
			\( y^2 \)  &  0.900933  (9.0e-6)  &  0.301072 (2.5e-5)   &  1.57485 (5.6e-5)    \tabularnewline
			\(  \exp(-y) \)   &  1.40071 (1.1e-5)  &  -1.16071 (2.8e-5)  &  0.703935 (7.2e-5)   \tabularnewline
			\(  \mathbbm{1}_{y > x} \)   &  0.492925 (3.5e-6)  &  -0.3854 (4.7e-6)   &  -0.0219749 (8.3e-6)  \tabularnewline
			\hline
		\end{tabular}
		\caption{Approximation for \(X^x\) solution of \eqref{eq:defturem} obtained with our unbiased algorithms. 
			\(N_{\textrm{MC}} = 2e10\), \(M = 0.5\), \(x = 0.04\).
			The program runs \(9e4\) seconds.
		}
		\label{tab:turem_2e10}
	\end{center}
\end{table}

\begin{table}[ht!]
\begin{center}
\begin{tabular}{|c|c|c|c|}
\hline
&&&\tabularnewline
\( \Psi(y)\) & \(\hat{P}_\Psi(N_{\textrm{MC}},\delta)\)
& 
\( \hat{\Delta}_\Psi(N_{\textrm{MC}},\delta,dx)\)
& 
\( \hat{\Gamma}_\Psi(N_{\textrm{MC}},\delta,dx)\)
\tabularnewline
 & \(\quad- P_\Psi\)
& 
\( \quad -\Delta_\Psi\)
& 
\( \quad -\Gamma_\Psi\)
\tabularnewline
&&&\tabularnewline
\hline
\(  y^2 \)  &  8.8e-3  (4.1e-5)  &  5.0e-3 (1.1e-4)   &  1.1e-3 (1.1e-3)    \tabularnewline
\(  \exp(-y) \)   &  1.5e-2 (4.9e-5)  &  -2e-2 (1.3e-4)  &  7.0e-3 (1.2e-3)   \tabularnewline
\(  \mathbbm{1}_{y > x } \)   &  -7.1e-5 (1.6e-5)  &  1.1e-2 (3.8e-5)   &  -2.6e-3 (3.9e-4)  \tabularnewline
\hline
\end{tabular}
\caption{
	Error with an Euler scheme with step \(\delta = 0.1\) and
	finite difference approximation with step \(dx=0.4\). 
	\(X^x\) is solution of \eqref{eq:defturem},
	\(N_{\textrm{MC}} = 1e9\), \(M = 0.5\), \(x = 0.04\).
	The program runs \(5.6e3\) seconds.
	}
\label{tab:tuefd_step_1em1_stepdf_4em1}
\end{center}
\end{table}

\begin{table}[ht!]
\begin{center}
\begin{tabular}{|c|c|c|c|}
	\hline
	&&&\tabularnewline
	\( \Psi(y)\) & \(
	\hat{P}_\Psi(N_{\textrm{MC}},\delta)\)
	& 
	\( \hat{\Delta}_\Psi(N_{\textrm{MC}},\delta,dx)\)
	& 
	\( \hat{\Gamma}_\Psi(N_{\textrm{MC}},\delta,dx)\)
\tabularnewline
 & \(\quad- P_\Psi\)
 & 
 \( \quad -\Delta_\Psi\)
 & 
 \( \quad -\Gamma_\Psi\)
 \tabularnewline
 &&&\tabularnewline

	\hline
	\(  y^2 \)   &  4.5e-4  (1.8e-4)  &  3.6e-3 (9.3e-4)   &  -2.2e-3 (1.8e-2)    \tabularnewline
\( \exp(-y) \)   &  8.1e-5 (2.1e-4)  &  -1.1e-3 (1.1e-3)  &  -1.2e-3 (2.1e-2)   \tabularnewline
\( \mathbbm{1}_{y > x} \)   &  6.8e-5 (7.1e-5)  &  2.7e-3 (3.5e-4)   &  -5.0e-4 (7.0e-3)  \tabularnewline
\hline
\end{tabular}
\caption{
		Error with an Euler scheme with step \(\delta = 0.005\) and
		finite difference approximation with step \(dx=0.2\). 
		\(X^x\) is solution of \eqref{eq:defturem},
		\(N_{\textrm{MC}} = 5e7\), \(M = 0.5\), \(x = 0.04\).
		The program runs \(2.9e3\) seconds.
		}
\label{tab:tuefd_step_5em3_stepdf_2em1}
\end{center}
\end{table}

\paragraph{Conclusion}
To obtain an error of the same magnitude with our unbiased estimator,
we have to use between \(N_{\textrm{MC}}=1e5\) and \(N_{\textrm{MC}}=1e6\)  for the rough case (Table~\ref{tab:tuefd_step_1em1_stepdf_4em1})
and between \(N_{\textrm{MC}}=1e6\) and \(N_{\textrm{MC}}=1e7\) for the more precise case (Table~\ref{tab:tuefd_step_5em3_stepdf_2em1}).
The size of the sample obviously depends on the function \(\Psi\) and
the order of the derivative we approximate.
Our algorithm is well adapted for the approximation of \(\Delta_\Psi\)
and \(\Gamma_\Psi\).

In any cases, our algorithm is faster (10 to 100 times faster than the Euler
scheme).

\subsection{Symmetric modified Orstein-Uhlenbeck, convergence of the error of truncation} \label{subsec:surem}
	We test our unbiased algorithm to a second \textit{toy} model.
	We only evaluate in this Section the error due to the truncation
	of the Poisson process. That is, we illustrate the results of
	Section~\ref{subsec:control_error}.
	The comparison with an Euler scheme and finite difference approximation
	of the derivatives are very similar (in terms of complexity and of efficiency) to those obtained in the previous section. 
	Thus, we do not include them for this example.

\subsubsection{Introduction} \label{sec:surem_drift}
We slightly modify the drift introduced in the previous example.
In this part, we put
\begin{multline}\label{eq:surem}
X_T^x = x + \int_{0}^{T}\alpha(X_t^x)dt + W_T\\
\alpha(x)  = -M \left(x + \dfrac{1}{2}\right) \mathbbm{1}_{x \leq -1} +\frac{M}{2} x^2 \mathbbm{1}_{-1 \leq x \leq 1}
+ M \left(x - \dfrac{1}{2}\right) \mathbbm{1}_{x \geq 1}.
\end{multline}
\begin{rem}
	For \(y\leq 0\), the drift \(\alpha(y)\) is identical to the
	drift in the previous example, but instead of putting \(\alpha(y)= 0\) for \(y\geq 0\), the drift is now symmetric.
	The associated function \(\varphi\) satisfies
	\(\lim_{-\infty}\varphi = \lim_{+\infty}\varphi=+\infty\).	
\end{rem}
For any threshold \(K\), we simulate the final
value \(\tilde{B}_T^{x}\), the minimum \(m\) of the Brownian 
bridge on \([0,T]\) and compute \(\tilde{K}(\omega)\) according to \eqref{eq:lienktildek}. We then simulate a Poisson process \(N^K\)
on \([0,T]\times[0,\tilde{K}(\omega)]\) and accept the path if
\(N^K\cap D(\omega)=\emptyset\), where \(D(\omega)\) denotes
the hypograph of \(\varphi(\tilde{B}_t^x)\). We denote by \(\tilde{X}^{x,K}_T\)
the accepted values. We denote by \(p^K\) the probability to accept a Brownian bridge path (see \eqref{eq:probaaccepttruncK}).

We use the notation 
\(\tilde{P}_\Psi(N_{\textrm{MC}},K)\), \(\tilde{\Delta}_\Psi(N_{\textrm{MC}},K)\) and \(\tilde{\Gamma}_\Psi(N_{\textrm{MC}},K)\)
for our Monte Carlo approximations of \eqref{eq:prix}, \eqref{eq:delta} and \eqref{eq:gamma}, with a sample of size \(N_{\textrm{MC}}\) and 
a truncated Poisson process at level \(K\).

\subsubsection{Results}
The result for \(K=100\) are given in Table~\ref{tab:surem_m100} and are considered as benchmark.
\begin{table}[ht!]
	\begin{center}
		\begin{tabular}{|c|c|c|c|}
			\hline
			&&&\tabularnewline
			\( \Psi(y)\) & \(\tilde{P}_\Psi(N_{\textrm{MC}},K)\) & \(\tilde{\Delta}_\Psi(N_{\textrm{MC}},K)\) & 
			\(\tilde{\Gamma}_\Psi(N_{\textrm{MC}},100)\)
			\tabularnewline&&&\tabularnewline
			\hline
			\(  y^2 \)   &  0.904526  (2.8e-5)  &  0.164247 (7.0e-5)   &  1.02012 (1.5e-4)    \tabularnewline
			\( \exp(-y) \)   &  1.36243 (3.3e-5)  &  -1.08837 (8.7e-5)  &  0.564459 (2.2e-4)   \tabularnewline
			\( \mathbbm{1}_{y>x} \)   &  0.47637 (1.1e-5)  &  -0.357681 (1.4e-5)   &  -0.0531064 (2.6e-5)  \tabularnewline
			\hline
		\end{tabular}
		\caption{
			Results for the approximation \(X^{x,K}_T\) of the solution of 
			\eqref{eq:surem}. \(K=100\), 
			\(M=0.5\), \(x=0.04\), \(N_{\textrm{MC}}=2e9\).
			The program runs \(1.1e5\) seconds.}
		\label{tab:surem_m100}
	\end{center}
\end{table}

In Tables~\ref{tab:surem_m0}, \ref{tab:surem_m1}, \ref{tab:surem_m2}, we can see the
approximated biases for \(K=0, 1, 2\). We observe that according to Proposition~\ref{prop:controlerreur}, the bias decrease fast 
with \(K\) and the bias seems to be neglicted for \(K=2\),
even for the approximation of the derivatives.

Table~\ref{tab:lespk} gives  the empirical probability 
\(p^K\) to accept a Brownian bridge with the truncated algorithm
at level \(K\). It is obviously a monotonic function of \(K\).
We observe that \(p^2\approx p^{100}\) with a very large accuracy.

\begin{table}[ht!]
\begin{center}
\begin{tabular}{|c|c|c|c|}
	\hline
				&&&\tabularnewline
	\( \Psi(y)\) & \(\tilde{P}_\Psi(N_{\textrm{MC}},K)\)  &
	\(\tilde{\Delta}_\Psi(N_{\textrm{MC}},K) \)
	 & 
	 \(\tilde{\Gamma}_\Psi(N_{\textrm{MC}},K)\)
	\tabularnewline&
	\(\quad\quad- \tilde{P}_\Psi(N_{\textrm{MC}},100)\)
		&
\(\quad	\quad	- \tilde{\Delta}_\Psi(N_{\textrm{MC}},100)\) 
		&
\(\quad	\quad	 - \tilde{\Gamma}_\Psi(N_{\textrm{MC}},100)\) 
		\tabularnewline
	\hline
	\(  y^2 \)   &  
	1.66e-2
	(2.8e-5) 
	&  
	7.6e-4
	(6.9e-5)  
	&  
	9.7e-3
	(1.5e-4)  
	\tabularnewline
\( \exp(-y) \)   &  
1.2e-3
(3.2e-5)   
&  
-4.9e-2
(8.6e-5) 
&  
1.9e-2
(2.2e-4)  
\tabularnewline
\( \mathbbm{1}_{y > x} \)   &  
-8.0e-3
(1.1e-5)  
&  
-1.6e-2
(1.4e-5) 
&  
6.3e-3
(2.6e-5)  
 \tabularnewline
\hline
\end{tabular}
\caption{
		Errors with the truncated approximation \(X^{x,K}_T\) of the solution of 
		\eqref{eq:surem}. \(K=0\), \(M=0.5\), \(x=0.04\), \(N_{\textrm{MC}}=2e9\).
		The program runs \(1.1e4\) seconds. }
\label{tab:surem_m0}
\end{center}
\end{table}

\begin{table}[ht!]
\begin{center}
\begin{tabular}{|c|c|c|c|}
	\hline
			&&&\tabularnewline
	\( \Psi(y)\) & \(\tilde{P}_\Psi(N_{\textrm{MC}},K)\)  &
	\(\tilde{\Delta}_\Psi(N_{\textrm{MC}},K) \)
	 & 
	 \(\tilde{\Gamma}_\Psi(N_{\textrm{MC}},K)\)
	\tabularnewline&
	\(\quad\quad- \tilde{P}_\Psi(N_{\textrm{MC}},100)\)
		&
\(\quad	\quad	- \tilde{\Delta}_\Psi(N_{\textrm{MC}},100)\) 
		&
\(\quad	\quad	 - \tilde{\Gamma}_\Psi(N_{\textrm{MC}},100)\) 
		\tabularnewline
	\hline
	\(  y^2 \)   &  
	1.0e-4
	(2.8e-5) 
	&  
	4.1e-4
	(7.0e-5)  
	&  
	-9.5e-3
	(1.5e-4)    
	\tabularnewline
\( \exp(-y) \)   &  
1.0e-5
(3.3e-5)  
&  
1.4e-4
(8.7e-5)   
&  
-6.3e-3
(2.2e-4)  
\tabularnewline
\( \mathbbm{1}_{y>x} \)   &  
1.6e-5
(1.1e-5)  
&  
5.0e-6
(1.4e-5)  
&  
6.0e-4
(2.6e-5)  
\tabularnewline
\hline
\end{tabular}
\caption{	Results for the approximation \(X^{x,K}_T\) of the solution of 
	\eqref{eq:surem}. \(K=1\), \(M=0.5\), \(x=0.04\), \(N_{\textrm{MC}}=2e9\).
	The program runs \(1.2e4\) seconds.}
\label{tab:surem_m1}
\end{center}
\end{table}

\begin{table}[ht!]
\begin{center}
\begin{tabular}{|c|c|c|c|}
	\hline
				&&&\tabularnewline
	\( \Psi(y)\) & \(\tilde{P}_\Psi(N_{\textrm{MC}},K)\)  &
	\(\tilde{\Delta}_\Psi(N_{\textrm{MC}},K) \)
	 & 
	 \(\tilde{\Gamma}_\Psi(N_{\textrm{MC}},K)\)
	\tabularnewline&
	\(\quad\quad- \tilde{P}_\Psi(N_{\textrm{MC}},100)\)
		&
\(\quad	\quad	- \tilde{\Delta}_\Psi(N_{\textrm{MC}},100)\) 
		&
\(\quad	\quad	 - \tilde{\Gamma}_\Psi(N_{\textrm{MC}},100)\) 
		\tabularnewline
	\hline
	\(  y^2 \)   &   
	6e-6
	(2.8e-5)  
	&  
	-2.7e-5
	(7.0e-5)  
&  
	2.2e-4
	(1.5e-4) 
	\tabularnewline
\( \exp(-y) \)   &  
1.0e-5
(3.3e-5)  
&  
-1.0e-5
(8.7e-5)  
&  
4.2e-5
(2.2e-4) 
\tabularnewline
\(  \mathbbm{1}_{y>x} \)   &  
1.9e-5
(1.1e-5)  
&  
-9e-6
(1.4e-5)  
&  
-1.5e-5
(2.6e-5) 
\tabularnewline
\hline
\end{tabular}
\caption
{	Results for the approximation \(X^{x,K}_T\) of the solution of 
	\eqref{eq:surem}.\(K=2\),\(M=0.5\), \(x=0.04\), \(N_{\textrm{MC}}=2e9\).
	The program runs \(1.3e4\) seconds. }
\label{tab:surem_m2}
\end{center}
\end{table}

\begin{table}[h]
	\begin{center}
		\begin{tabular}{|c|c|c|c|c|}
			\hline
			\(K\)&0&1&2&100\\
			\hline
			\(p^K\)&0.877731 & 0.832898 & 0.832884&0.832877\\
			\hline
		\end{tabular}
		\caption{Probability \(p^K\) 
			defined in \eqref{eq:probaaccepttruncK} to accept a Brownian bridge path.}
		\label{tab:lespk}
	\end{center}
\end{table}

\subsection{CIR} \label{subsec:cir_an}
In this Section, we present the numerical results obtained for the simulation of the CIR process \((V_t,0\leq t \leq T)\), solution of \eqref{eq:CIR} (see Section~\ref{sec:cirmodel}). 
There is a large literature on the evaluation of \(\mathbb{E}(\Psi(V_T))\)
(see e.g.  \cite{livreAlfonsi} and references therein). Our aim is not to construct a specific 
algorithm for this particular case. Howerer, we think that it is relevant 
to illustrate the efficiency of our algorithm to this non trivial case.

The numerical experiments are computed with parameters \(\kappa = 0.5\), \(V_\infty = 0.04\), \(\varepsilon = 0.1\), \(T=1\) and the initial condition \(v = 0.04\). 
The algorithm differs from the two previous examples. We first apply
the Lamperti transform and simulate \(X_t^x = \eta(V_t^v)\) with our 
(almost) unbiased algorithm (with \(x = \eta(v)\)). 
Then, for any function \(\Psi\), we use the approximation
\(\tilde{P}_\Psi^X(x,N_{\textrm{MC}})\), \(\tilde{\Delta}_\Psi^X(x,N_{\textrm{MC}})\) and \(\tilde{\Gamma}_\Psi^X(x,N_{\textrm{MC}})\)
constructed for the process \(X^x\).
We deduce the corresponding approximation for the CIR 
\begin{align}
\bar{P}_\Psi^V(v,N_{\textrm{MC}}) &= \tilde{P}^X_\Psi(\eta(v),N_{\textrm{MC}})
\label{eq:prixbarcir}\\
\bar{\Delta}_\Psi^V(v,N_{\textrm{MC}}) &= \eta^\prime(v)\tilde{\Delta}_\Psi^X(\eta(v),N_{\textrm{MC}})
\label{eq:deltabarcir}\\
\bar{\Gamma}_\Psi^V(v,N_{\textrm{MC}}) &= \eta^\prime(v)^2\tilde{\Gamma}^X_\Psi(\eta(v),N_{\textrm{MC}}) + 
\eta^{\prime\prime}(v)\tilde{\Delta}_\Psi^X(\eta(v),N_{\textrm{MC}}).
\label{eq:gammabarcir}
\end{align}
\paragraph*{Description of the algorithms}
We first remind the quantities we aim to estimate.
Then, we describe the four algorithms we numerically compare in this Section.
\begin{enumerate}[1-] \setcounter{enumi}{-1}
	\item 
	The exact values are denoted  by \(P_\Psi(v)\), \(\Delta_\Psi(v)\) and \(\Gamma_\Psi(v)\), that is
\begin{align*}
P_\Psi(v) &= \mathbb{E}\Psi(V_T^v)\\
\Delta_\Psi(v) &= \dfrac{d}{dv}\mathbb{E}\Psi(V_T^v)\\
\Gamma_\Psi(v) &= \dfrac{d^2}{dv^2}\mathbb{E}\Psi(V_T^v).
\end{align*}
\item 
Our approximations \(\bar{P}_\Psi(N_{\textrm{MC}})\), \(\bar{\Delta}_\Psi(N_{\textrm{MC}})\) and \(\bar{\Gamma}_\Psi(N_{\textrm{MC}})\) are defined in \eqref{eq:prixbarcir}, \eqref{eq:deltabarcir} and \eqref{eq:gammabarcir}.
\item The approximations using an Euler scheme and finite difference approximation are denoted \(\hat{P}_\Psi(N_{\textrm{MC}},\delta)\),
\(\hat{\delta}_\Psi(N_{\textrm{MC}},\delta,dv)\) and \(\hat{\Gamma}_\Psi(N_{\textrm{MC}},\delta,dv)\) (see Sec.~\ref{sec:tuefd_vs_turem}).
  \item 
  We also approximate with an Euler scheme the expression of the derivatives obtained after the Malliavin integration by part (see Section~\ref{subsec:22} and \ref{subsec:23}): \(\check{\Delta}_\Psi(N_{\textrm{MC}},\delta)\)
  and \(\check{\Gamma}_\Psi(N_{\textrm{MC}},\delta)\).
\item Finally, we approximate \(\Delta_\Psi\) and \(\Gamma_\Psi\) thanks to the finite difference approximation applied to our unbiased estimators of
\(\bar{P}_\Psi^v(N_{\textrm{MC}})\), \(\bar{P}_\Psi^{v-dv}(N_{\textrm{MC}})\) and \(\bar{P}_\Psi^{v+dv}(N_{\textrm{MC}})\). We will denote  these approximations as \(\tilde{\Delta}_\Psi(N_{\textrm{MC}},dv)\) and \(\tilde{\Gamma}_\Psi(N_{\textrm{MC}},dv)\).
\end{enumerate}
The results and the corresponding standard deviations of these
estimators (with the truncated algorithm at level \(K=20\)) are given in Tables~\ref{tab:prixgrecquecir}, \ref{tab:prixgrecquecir_delta} and \ref{tab:prixgrecquecir_gamma}. We put in bold symbols the exact theoretical 
results when they are available.
For the function \(\Psi=\mathbbm{1}_{y>v}\), we have put in the reference column 
(\(P_\Psi\), \(\Delta_\Psi\), \(\Gamma_\Psi\)) the approximation
with our methods with a sample of size \(N_{\mathrm{MC}} = 1e12\).

\paragraph{Discussion on the results}
In any column, except the third one, we observe bias for the non smooth function \(\Psi(y)=\mathbbm{1}_{y>v}\). Moreover, the variance of
our algorithm is comparable to the variances of the biased one.
In a fixed time devoted for simulation, our unbiased algorithm
is always the most precise one in these examples.

\paragraph{Control of the error}
Even if the rigorous proof presented in Section~\ref{subsec:control_error}
can not be directly used for the CIR process, a similar control
of the error for the truncated algorithm should be obtained.
For \(K=20\) and the bounded function \(\Psi\) case (\(\Psi(y) = \mathbbm{1}_{\{y>v\}}\)), we obtained an accuracy of order \(1e-100\).

\begin{table}
\begin{center}
\begin{tabular}{|c|c|c|c|c|c|}
 \hline
\( \Psi(y)\) & \(P_\Psi\) &   \(\bar{P}_\Psi(N_{\textrm{MC}}^1) -P_\Psi\) & \( \hat{P}_\Psi(N^2_{\textrm{MC}}, \delta^1) - P_\Psi\) & 
\( \hat{P}(N_{\textrm{MC}}^2, \delta^2)- P_\Psi\)
 \tabularnewline
\hline
\( y \)  &  \textbf{0.04} &  -4.5e-9  (1.6e-8) 
 &  7e-7 (5.0e-7)
 &  2e-7 (5.1e-7) 
  \tabularnewline
\( \mathbbm{1}_{y>v} \) &   0.545628  &  0 (5.0e-7)  &  2.5e-4 (1.8e-5)  
&  3.1e-3 (1.6e-5)   
  \tabularnewline
\( \exp(-y) \) & \textbf{ 0.960910476 }  &  1e-9 (1.5e-8)  
 &  1.2e-7 (4.8e-7) 
 &  6e-6 (4.9e-7)
   \tabularnewline
\hline
\end{tabular}
 \caption{Estimation of the error (reference in bold) on the expectation and the corresponding
standard deviation for the CIR with different methods.
\(N^1_{\textrm{MC}}=1e12\), \(N^2_{\textrm{MC}}=1e9\), \(\delta^1=0.001\),
\(\delta^2=0.1\).
}
\label{tab:prixgrecquecir}
\end{center}
\end{table}

\begin{table}
\begin{center}
\begin{tabular}{|c|c|c|c|c|c|}
 \hline
\(\Psi\) & \(\Delta_\Psi\) & \( \bar{\Delta}_\Psi(N^1_{\textrm{MC}})\) &  
\(\hat{\Delta}_\Psi(N_{\textrm{MC}}^2,\delta^1)\)
& 
\(\check{\Delta}_\Psi(N_{\textrm{MC}}^3,\delta^2,dv)\)
& 
\(\tilde{\Delta}_\Psi(N_{\textrm{MC}}^3,dv)\)
 \tabularnewline
&&\(-\Delta_\Psi\)&
\(-\Delta_\Psi\)&
\(-\Delta_\Psi\)&
\(-\Delta_\Psi\) \tabularnewline
\hline  
\( y \)  &\textbf{0.606531}  &  1e-6 (6.5e-6) 
&  3.0e-3 (6.3e-5)  
&  7.8e-3 (5.1e-5)   
&  2e-6 (4.2e-5)    
\tabularnewline
 \( \mathbbm{1}_{y>v} \) & -15.3247626  &  0  (8.5e-5)  &  -4.3e-2 (7.4e-4)  
 &  -8.0e-1 (1.5e-3)    
 &  -0.32 (1.3e-3)  
 \tabularnewline
\( \exp(-y) \) &  \textbf{-0.58053743}  &  -2.1e-7 (1.3e-4)   
&  6.2e-2 (1.2e-3) 
&  7.6e-3 (4.9e-5) 
&  -2e-6 (4.0e-5) 
\tabularnewline
\hline
\end{tabular}
 \caption{Estimation of the first derivative and the corresponding
standard deviation for the CIR with different methods.
\(N_{\textrm{MC}}^1=1e12\),
\(N_{\textrm{MC}}^2=1e9\),
\(N_{\textrm{MC}}^3=1e10\),
\(\delta^1=0.001\),
\(\delta^2=0.1\),
\(dv=0.01\)}
\label{tab:prixgrecquecir_delta}
\end{center}
\end{table}

\begin{table}
\begin{tabular}{|c|c|c|c|c|c|}
	\hline
	\(\Psi\) & \(\Gamma_\Psi\) & \( \bar{\Gamma}_\Psi(N^1_{\textrm{MC}})\) &  
	\(\hat{\Gamma}_\Psi(N_{\textrm{MC}}^2,\delta^1)\)
	& 
	\(\check{\Gamma}_\Psi(N_{\textrm{MC}}^3,\delta^2,dv)\)
	& 
	\(\tilde{\Gamma}_\Psi(N_{\textrm{MC}}^3,dv)\)
	\tabularnewline
	&&\(-\Gamma_\Psi\)&
	\(-\Gamma_\Psi\)&
	\(-\Gamma_\Psi\)&
	\(-\Gamma_\Psi\) \tabularnewline
	\hline  
	\( y \)  &
\textbf{ 0} & -5.1e-4 (1.9e-3)  
&  -4.0e-2 (4.0e-3)  
&  3.7e-4 (2.0e-2)   
&  -1.1e-2 (1.7e-2)     
\tabularnewline
 \( \mathbbm{1}_{y>v} \) &  91.0234  &   0  (2.3e-2)  &  3.0e-1 (5.5e-2)    
 &  12 (6.2e-1)    
 &  -7.1 (5.2e-1)  
 \tabularnewline
\(\exp(-y)\)&\textbf{ 0.35073}  &  -3.1e-3 (3.6e-2) 
&  -4.6e-1 (6.9e-2)  
&  -1.0e-2 (2.0e-2)   
&  1.0e-2 (1.6e-2)  
\tabularnewline
\hline
\end{tabular}
 \caption{Estimation of the second derivative  and the corresponding
standard deviation for the CIR with different methods.
\(N_{\textrm{MC}}^1=1e12\),
\(N_{\textrm{MC}}^2=1e9\),
\(N_{\textrm{MC}}^3=1e10\),
\(\delta^1=0.001\),
\(\delta^2=0.1\),
\(dv=0.01\)}
\label{tab:prixgrecquecir_gamma}
\end{table}

\section{Conclusion}
In this work, we  generalise the Beskos et al.~\cite{BPR06}
exact method to simulate the solution of one dimensional SDEs. We
simulate the Poisson process useful to reject the Brownian bridge paths
in a more efficient order (by increasing ordinates). It also allows
us to extend the methodology to more general drift functions \(\alpha\).
In this case, we introduce a new bias but we obtain a control of the error: it converges exponentially fast to \(0\) with the truncation parameter.

In addition, we  proposed to generalise the unbiased Monte Carlo algorithm to
the estimation of the derivatives \eqref{eq:delta} and \eqref{eq:gamma}.

In comparison with the previous classical numerical methods, our algorithm
is more efficient if we want to obtain a sufficiently good accuracy.
For rough approximations, the bias introduced by the Euler scheme has the same 
order  as the statistical error of our algorithm.

\def\refname{References}
\bibliographystyle{plain}
\bibliography{biblio}

\begin{minipage}{0.49\textwidth}
	\textbf{Victor Reutenauer}\\
	Fotonower \\
	30 rue Charlot\\ 
	F-75003 Paris\\
	\texttt{victor@fotonower.com}
\end{minipage}%
\begin{minipage}{0.49\textwidth}
	\textbf{Etienne Tanr\'e}\\
	Universit\'e C\^ote d'Azur, Inria, France.\\
	Team Tosca\\
	2004, route des Lucioles, BP 93\\
	F-06902 Sophia Antipolis Cedex\\
	\texttt{Etienne.Tanre@inria.fr}
\end{minipage}%
\begin{minipage}{0.33\textwidth}
\end{minipage}

\end{document}